\numberwithin{equation}{section}
\theoremstyle{plain}
\newtheorem{theorem}{Theorem}[section]
\newtheorem{lemma}[theorem]{Lemma}
\newtheorem{proposition}[theorem]{Proposition}
\theoremstyle{plain}
\theoremstyle{remark}
\newtheorem{remark}[theorem]{Remark}
\DeclareMathOperator{\supp}{supp}
\DeclareMathOperator{\dist}{dist}
\DeclareMathOperator{\diag}{diag}
\begin{document}

\date{}
\title
[Lattice Points in stretched finite type domains]{Lattice points in stretched finite type domains}
\author[J. Guo and T. Jiang]{Jingwei Guo \  \ Tao Jiang}

\address{Department of Mathematics\\
University of Science and Technology of China\\
Hefei 230026, China}
\email{jwguo@ustc.edu.cn}

\address{Department of Mathematics\\
University of Science and Technology of China\\
Hefei 230026, China}
\email{jt1023@mail.ustc.edu.cn}

\thanks{J.G. is partially supported by the Fundamental Research Funds for the Central Universities (No. WK3470000013) and NSF of Anhui Province, China (No. 2108085MA12).}

\subjclass[2020]{Primary 11P21, 42B10}

\keywords{Lattice points, finite type domains, optimal stretching.}

\date{}

\begin{abstract}
We study an optimal stretching problem, which is a variant lattice point problem, for convex domains in $\mathbb{R}^d$ ($d\geq 2$) with smooth boundary of finite type that are symmetric with respect to each coordinate hyperplane/axis. We prove that optimal domains which contain the most positive (or least nonnegative) lattice points are asymptotically balanced.
\end{abstract}

\maketitle

\section{Introduction}\label{introduction}

The classical lattice point problem is about counting the number of lattice points $\mathbb{Z}^d$ in large domains
in the Euclidean space $\mathbb{R}^d$. It has a long history which can be traced back to C.F. Gauss who studied the number of lattice points in large disks.  In this paper we study the following variant lattice point problem, the so-called \textit{optimal stretching problem}.

Let
\begin{equation*}
A=\diag (a_1,a_2\ldots,a_d)
\end{equation*}
be a positive definite diagonal matrix with determinant  $1$. Let $\Omega\subset \mathbb{R}^d$ be a compact domain which contains the origin in its interior. A volume-preserving \textit{stretch} of $\Omega$ by the \textit{stretching factor} $A$ is a domain of the form
\begin{equation*}
A\Omega=\{(a_1x_1,\ldots,a_dx_d):(x_1,\ldots,x_d)\in\Omega\}.
\end{equation*}
One would like to know the limiting behaviour of $A$ (as $t$ goes to infinity) for those matrices $A$ such that the number of positive-integer lattice points in the enlarged stretch of $\Omega$, i.e. $\#(\mathbb{N}^d\cap tA\Omega)$, attains the largest value. A similar question can be asked for matrices $A$ such that $\#(\mathbb{Z}_+^d\cap t A\Omega$) attains the smallest value where $\mathbb{Z}_{+}=\{0\}\cup \mathbb{N}$.

The optimal stretching problem was initiated by Antunes and Freitas, who considered in \cite{AF13} the stretch of the unit disk in $\mathbb{R}^2$ and proved that among all ellipses of the same area, those that enclose the most lattice points in the first quadrant must be more and more ``round'', as the area goes to infinity. In other words, the limit of the stretching factor is the identity matrix. Their motivation of such a study was a problem in spectral theory of minimizing Dirichlet eigenvalues of the Laplace operator among rectangles of equal area. In fact their result on asymptotically minimizing the $n$-th eigenvalue among rectangles of given area is equivalent to asymptotically maximizing the number of positive-integer lattice points in ellipses of given area.

We remark that the optimal stretching problem and closely related shape/eigenvalue optimization problems in spectral theory have been of large interest in recent years. For explanation on their connection and more results on the latter problems see for example van den Berg, Bucur and Gittins~\cite{BBG}, van den Berg and Gittins~\cite{BG}, Gittins and Larson~\cite{GL}, Larson~\cite{Larson19, LarsonJST} and references therein. In what follows we focus on the optimal stretching problem for domains more general than ellipses/ellipsoids.

In a pair of papers Laugesen and Liu~\cite{LL16} and Ariturk and Laugesen~\cite{AL} extended the result of Antunes and Freitas by considering general planar domains (including $p$-ellipses $|sx|^p+|y/s|^p=t^p$ for $p\in (0,\infty)\setminus\{1\}$). They showed, among others, that under mild assumptions on the boundary curve  optimal domains which contain the most positive (or least nonnegative) lattice points must be asymptotically balanced.  (We recall that a domain in $\mathbb{R}^d$ ($d\geq 2$) is said to be \emph{balanced} if the $(d-1)$-dimensional measures of the intersections of the domain with each coordinate hyperplane are equal.) They also provided rates of convergence of optimal stretching factors. Notice that their results allow the curvature of the boundary curve to vanish or blow up at the intersection points with coordinate axes.

However, if the boundary is ``too flat'' the result could be very different---optimal domains needs not to be asymptotically balanced. For example, Marshall and Steinerberger~\cite{MS18} analyzed the case of triangles (namely the $p$-ellipses with $p=1$). They showed that there are infinitely many optimal domains for arbitrarily large $t$.

The difference between these results is essentially a consequence of different curvature assumptions. This is not surprising since the lattice point counting is closely related to oscillatory integral estimates in which curvature plays a key role. The phenomenon of asymptotic balancing was further confirmed in Marshall~\cite{M20} for convex domains in $\mathbb{R}^d$ with $C^{d+2}$ boundary and non-vanishing Gaussian curvature.

Naturally one may next ask if asymptotic balancing still occurs for the intermediate case between ``non-flat'' and ``flat'' cases, especially in high dimensions. A few attempts have been made. For example, in \cite{GW18} the first author and Wang considered certain \textit{special} convex domains of finite type\footnote{That is, at each boundary point each tangent line has finite order of contact.} in $\mathbb{R}^d$ (including super spheres, i.e. high dimensional $p$-ellipsoids) and gave an affirmative answer.   Later we slightly generalized this result in \cite{GJ19}.

The goal of this paper is to prove the aforementioned asymptotic balancing phenomenon for \textit{arbitrary} convex domains of finite type.

Let $\Omega$ be a convex domain with smooth boundary of finite type. Throughout this paper we set, for any  $P\in\partial\Omega$ , that
\begin{equation}\label{def-nuomegaP}
  \nu_{\Omega}(P)=\sum_{i=1}^{d-1}\mathfrak a_i^{-1}
\end{equation}
and
\begin{equation}\label{def-nuomegaP2}
\begin{split}
  \nu_{\Omega}^{(2)}(P)=\left\{
  \begin{array}{ll}
  0&\text{if $d=2$},\\
  \sum_{i=2}^{d-1}\mathfrak a_i^{-1}&\text{if $d\ge 3$},
  \end{array}
  \right.
\end{split}
\end{equation}
where $\mathfrak a=(\mathfrak a_1,\mathfrak a_2,\ldots,\mathfrak a_{d-1})$ is the multitype (type if $d=2$) of $\partial \Omega$ at the point $P$.  See Iosevich, Sawyer and Seeger~\cite[P. 155--156]{ISS02} for the definition of multitype. We also set
\begin{equation}
\nu_{\Omega}=\min_{P\in\partial \Omega}\nu_{\Omega}(P) \label{222}
\end{equation}
and
\begin{equation}
\mu_{\Omega}=\frac{1}{2}+\min_{P\in\partial \Omega }\nu_{\Omega}^{(2)}(P). \label{333}
\end{equation}

For each $t\geq 1$ we define
\begin{equation}\label{def-At}
\mathfrak A_{\Omega}(t)=\mathrm{argmax}_A\#\left(\mathbb{N}^d\cap tA\Omega\right)
\end{equation}
and
\begin{equation}\label{def-At222}
\widetilde{\mathfrak {A}}_{\Omega}(t)=\mathrm{argmin}_{A}\#\left(\mathbb{Z}_+^d\cap tA\Omega\right),
\end{equation}
where the argmax and argmin  range over all positive definite diagonal matrices $A$ of determinant $1$. The notation  $\mathrm{argmax}_x f(x)$ (resp. $\mathrm{argmin}_x f(x)$) is the set of points $x$
for which $f(x)$ attains the function's largest (resp. least) value. Note that optimal stretching factors in \eqref{def-At} and \eqref{def-At222} are in general not unique. In what follows, when we write $A(t)$ in $\mathfrak A_{\Omega}(t)$ (resp. $\widetilde{\mathfrak {A}}_{\Omega}(t)$), we really mean that $A(t)$ is an arbitrary element in $\mathfrak A_{\Omega}(t)$ (resp. $\widetilde{\mathfrak {A}}_{\Omega}(t)$).

For each $1 \le j \le d$, we let $\Omega_j$ be the intersection of $\Omega$ with the coordinate hyperplane $x_j=0$.

With the above notations, our main results can be stated as follows.

\begin{theorem}\label{main1}
Let $\Omega\subset \mathbb R^{d}$ ($d\geq 2$) be a convex compact domain, that is symmetric with respect to each coordinate hyperplane (axis if $d=2$), with smooth boundary of finite type. If
\begin{equation*}
A(t)=\diag (a_1(t),\ldots,a_d(t))\in\mathfrak A_{\Omega}(t),
\end{equation*}
then
\begin{equation}
    \left|a_j(t)-\frac{|\Omega_j|}{\sqrt[d]{|\Omega_1||\Omega_2|\cdots|\Omega_d|}}\right|=O\left(t^{-\gamma}\right),\quad 1\le j\le d,   \label{optpositive}
\end{equation}
where $|\Omega_j|$ is the $(d-1)$-dimensional measure of $\Omega_j$ and
\begin{equation*}
    \gamma=\min\left\{\frac{\nu_{\Omega}}{2},\frac{\mu_{\Omega}}{2(d-\mu_{\Omega})}\right\}.
\end{equation*}
Similarly, if
\begin{equation*}
\widetilde A(t)=\diag (\tilde a_1(t),\ldots,\tilde a_d(t))\in \widetilde{\mathfrak A}_{\Omega}(t),
\end{equation*}
then
\begin{equation}
\left|\tilde a_j(t)-\frac{|\Omega_j|}{\sqrt[d]{|\Omega_1||\Omega_2|\cdots|\Omega_d|}}\right|
=O\left(t^{-\gamma}\right),\quad 1\le j\le d.   \label{optnonnegative}
\end{equation}
\end{theorem}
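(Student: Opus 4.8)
The plan is to reduce the optimization to a soft, convex-analytic problem by isolating a two-term asymptotic expansion of the counting functions in which only the second term depends on the stretching factor $A$. Since $\Omega$ is symmetric with respect to each coordinate hyperplane, one can sort the points of $\mathbb{Z}^d\cap tA\Omega$ by the set $S$ of their vanishing coordinates; for a fixed $S$ the $2^{d-|S|}$ admissible sign patterns on the remaining coordinates contribute equally. Writing $N_S(t):=\#\bigl(\mathbb{Z}^d\cap tA\Omega\cap\{x_i=0:i\in S\}\bigr)$ for the full lattice count in the corresponding coordinate slice and $p_T(t)$ for the number of points whose zero set is exactly $T$ and whose remaining coordinates are all positive, one has $N_S(t)=\sum_{T\supseteq S}2^{d-|T|}p_T(t)$, and M\"obius inversion over the Boolean lattice of coordinate subsets gives
\begin{equation*}
\#(\mathbb{N}^d\cap tA\Omega)=p_\varnothing(t)=2^{-d}\sum_{S\subseteq\{1,\dots,d\}}(-1)^{|S|}N_S(t),
\end{equation*}
while the analogous summation produces $\#(\mathbb{Z}_+^d\cap tA\Omega)=2^{-d}\sum_{S}N_S(t)$, all signs positive. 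Here the slice of $tA\Omega$ by $\{x_i=0:i\in S\}$ is $tA_{S^c}\Omega_S$, a stretched convex domain of finite type in dimension $d-|S|$, with $A_{S^c}=\diag(a_i:i\in S^c)$ and $\Omega_S=\bigcap_{i\in S}\Omega_i$.

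The next step is to insert the lattice remainder: using $\det A=1$,
\begin{equation*}
N_S(t)=t^{d-|S|}\Bigl(\prod_{i\in S}a_i\Bigr)^{-1}|\Omega_S|+E_S(t),
\end{equation*}
where $E_S(t)$, and for $|S|\ge 2$ also the size of the main term, is controlled, uniformly for $A$ in any fixed compact subset of the cone of admissible stretchings, by the Fourier decay of the surface measures of the hypersurfaces $\partial\Omega_S$ --- which is exactly what the quantities $\nu_\Omega$ and $\mu_\Omega$ in \eqref{222} and \eqref{333} encode. Summing, the $S=\varnothing$ term is the $A$-independent constant $2^{-d}t^d|\Omega|$; the terms with $|S|=1$ collapse, via $\prod_{i\ne j}a_i=a_j^{-1}$, to $\mp2^{-d}t^{d-1}G(A)$ with
\begin{equation*}
G(A):=\sum_{j=1}^d a_j^{-1}|\Omega_j|,
\end{equation*}
and all remaining terms are of lower order. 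Hence $\#(\mathbb{N}^d\cap tA\Omega)=2^{-d}t^d|\Omega|-2^{-d}t^{d-1}G(A)+R(t,A)$ and $\#(\mathbb{Z}_+^d\cap tA\Omega)=2^{-d}t^d|\Omega|+2^{-d}t^{d-1}G(A)+\widetilde R(t,A)$, with $R$ and $\widetilde R$ of size $O\bigl(t^{d-1-2\gamma}\bigr)$ on compact subsets of the stretching cone.

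Both optimization problems now reduce to \emph{minimizing} $G$ over $\{\diag(a_1,\dots,a_d):a_j>0,\ \prod_j a_j=1\}$. With $a_j=e^{u_j}$, the function $G=\sum_j|\Omega_j|e^{-u_j}$ is strictly convex, so on the hyperplane $\sum_j u_j=0$ it has a unique minimizer, which a Lagrange multiplier computation identifies as $a_j=a_j^{\ast}:=|\Omega_j|/\sqrt[d]{|\Omega_1|\cdots|\Omega_d|}$, with Hessian positive definite transverse to the constraint; therefore $G(A)-G(A^{\ast})\asymp|A-A^{\ast}|^2$ near $A^{\ast}$ and $G(A)-G(A^{\ast})\to\infty$ as $A$ tends to the boundary of the positive cone. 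Crude polynomial a priori bounds on $A(t)$ --- for instance, no point of $\mathbb{N}^d$ can lie in $tA\Omega$ once some $ta_j(t)$ falls below the width of $\Omega$, contradicting maximality for large $t$ --- refined by a bootstrap through rough forms of the expansion above, confine $A(t)$ to a fixed compact subset of the positive cone. Maximality then forces
\begin{equation*}
0\le G(A(t))-G(A^{\ast})\le 2^{d}\,t^{-(d-1)}\bigl(|R(t,A(t))|+|R(t,A^{\ast})|\bigr)=O\bigl(t^{-2\gamma}\bigr),
\end{equation*}
so first $A(t)\to A^{\ast}$ and then, by the quadratic lower bound, $|A(t)-A^{\ast}|=O(t^{-\gamma})$, which is \eqref{optpositive}; replacing $R$ by $\widetilde R$ gives \eqref{optnonnegative}.

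The main obstacle is the uniform remainder estimate that drives the second step: one must count lattice points in a finite-type convex body stretched by a \emph{possibly anisotropic} $A$, with a power saving that survives the anisotropy and is uniform over the a priori range of $A$. This rests on oscillatory-integral and van der Corput bounds for the Fourier transforms of the dilated surface measures of $\partial\Omega_S$, and it carries a built-in tension: the more anisotropic $A$ is allowed to be, the larger the relevant scales of $tA_{S^c}\Omega_S$ and the weaker these bounds become. Balancing the admissible anisotropy against the decay exponents attached to $\partial\Omega$ itself produces the rate $\nu_\Omega/2$, while balancing it against a dimension-dependent van der Corput estimate for the lower-dimensional slices --- in which one direction of nonvanishing curvature is exploited, which is the source of the $\tfrac12$ in $\mu_\Omega$ --- produces the rate $\mu_\Omega/(2(d-\mu_\Omega))$; taking the smaller of the two yields $\gamma$.
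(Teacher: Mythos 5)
Your architecture coincides with the paper's: a two-term expansion whose only $A$-dependent piece is $\mp 2^{-d}t^{d-1}G(A)$ with $G(A)=\sum_j a_j^{-1}|\Omega_j|$ (the paper reaches this through its Propositions on $\#(\mathbb Z^d\cap tA\Omega)$ and $\#(\mathbb Z^d\cap tA\cup_j\Omega_j)$ rather than M\"obius inversion, but the combinatorics is equivalent), followed by minimization of $G$ on the constraint surface $\prod_j a_j=1$ and conversion of $G(A(t))-G(A^{\ast})=O(t^{-2\gamma})$ into $|A(t)-A^{\ast}|=O(t^{-\gamma})$ via the quadratic nondegeneracy of the minimizer (the paper cites \cite[Lemma B.1]{GW18} for exactly this step). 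The description of where $\nu_\Omega$ and $\mu_\Omega/(d-\mu_\Omega)$ come from is also essentially right, though the latter exponent arises from optimizing a mollification parameter rather than from the anisotropy itself.

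The genuine gap is the a priori confinement of $A(t)$ to a fixed compact set, which you dispatch with ``crude polynomial bounds refined by a bootstrap through rough forms of the expansion.'' This does not close as stated. Any remainder estimate of the above type necessarily carries a positive power of $a_*=\|A^{-1}\|_\infty$ (in the paper it is $a_*^{d^2-d+1}$), while the term you want to use to control $a_*$ is only $G(A)\,t^{d-1}\gtrsim a_*t^{d-1}$. Starting from the crude bound $a_*(t)\lesssim t$, one pass through the expansion under the hypothesis $a_*\le t^{\theta}$ yields $a_*\lesssim 1+t^{(d^2-d+1)\theta-2\gamma}$; since $2\gamma<d-1<d^2-d+1$, the exponent grows at each iteration and the bootstrap diverges. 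The paper breaks this circularity with a separate, non-asymptotic ingredient: Lemma~\ref{general} (quoted from \cite{GW18}), the clean one-sided inequality $\#(\mathbb N^d\cap tA\Omega)\le 2^{-d}|\Omega|t^d-ca_*t^{d-1}$ valid for \emph{all} admissible $A$ with no error term, which compared against the balanced competitor $B$ gives $a_*(t)=O(1)$ at once. Moreover, your compactness heuristic (``no point of $\mathbb N^d$ lies in $tA\Omega$, contradicting maximality'') has no analogue for the minimization problem $\widetilde{\mathfrak A}_\Omega(t)$: a degenerately squashed domain containing no positive lattice points is not obviously bad for minimizing $\#(\mathbb Z_+^d\cap tA\Omega)$. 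The paper needs an extra volume argument there, showing that if $t/\tilde a_*(t)<1/C$ then a single coordinate slice already contributes $\gtrsim t^d$ nonnegative lattice points, beating $B$. Both of these ingredients must be supplied; the rest of your outline tracks the paper's proof.
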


\begin{remark}
The key to prove this theorem is an application of a delicate estimate of the Fourier transform of surface carried measure obtained in Iosevich, Sawyer and Seeger~\cite{ISS02}.

Our main goal was to weaken curvature assumptions in high dimensions, namely to extend the results in \cite{M20, GW18, GJ19} to arbitrary finite type domains. A further interesting question is whether the optimal balancing still occurs for infinite type domains. This may be a hard question noticing that there are very few results on counting lattice points in general convex domains of infinite type.

For convenience of stating our results we assume the domain's boundary is smooth. However it suffices to assume sufficient smoothness.

Our results work for finite type domains in $\mathbb{R}^2$. We did not try to further weaken assumptions however. Comparing to the planar results in \cite{LL16, AL}, we allow the curvature to vanish at finitely many boundary points rather than just at points of intersection with coordinate axes. The results in \cite{LL16, AL} have weaker regularity assumptions and are good for both convex and concave cases.
\end{remark}

{\it Notations:} The Fourier transform of any function $f\in L^1(\mathbb{R}^d)$ is
$\widehat{f}(\xi)=\int f(x) \exp(-2\pi i x\cdot \xi) \, dx$. For
functions $f$ and $g$ with $g$ taking nonnegative real values,
$f\lesssim g$ means $|f|\leq Cg$ for some constant $C$. If $f$ is
nonnegative, $f\gtrsim g$ means $g\lesssim f$. The Landau notation
$f=O(g)$ is equivalent to $f\lesssim g$. The notation $f\asymp g$
means that $f\lesssim g$ and $g\lesssim f$.  We set $\mathbb{R}^d_*=\mathbb{R}^d\setminus \{0\}$ and $\mathbb{Z}^d_*=\mathbb{Z}^d\setminus \{0\}$.

%%%%%%%%%%%%%%%%%%%%%%%%%%%%%%%%%%%%%%%%%%%%%%%%%%%%%%%%%%%%%%%%%%%%%%%%%%%%%%5%%%%%%%%%%%%%%%%%%%%%%%%%%%%%%%%%%%%%%%%%%%%%%%%%%%%%%%%

\section{Lattice point counting}

Throughout this section, we denote by $A=\diag(a_1,\ldots,a_d)$  a positive definite diagonal matrix with determinant $1$ and
\begin{equation*}
a_* =\|A^{-1}\|_\infty =\max\{a_1^{-1}, \ldots, a_d^{-1}\}.
\end{equation*}

We first quote a result from \cite{GW18} on two-term bounds for lattice point counting, which generalizes \cite[Proposition 6 and 9]{LL16} to the setting of strictly convex\footnote{A domain $\Omega\subset \mathbb{R}^d$ is said to be strictly convex if the line segment connecting any two points $x$ and $y$ in $\Omega$ lies in the interior of $\Omega$, except possibly for its endpoints.} domains in $\mathbb{R}^d$. We can indeed apply this result in this paper since convex domains of finite type are strictly convex.

\begin{lemma}[{\cite[Proposition 2.1]{GW18}}]\label{general}
Let $\Omega\subset [-C,C]^d\subset \mathbb{R}^d$ ($d\geq 2$, $C>0$) be strictly convex, compact and symmetric with respect to each coordinate hyperplane (axis if $d=2$) with $C^2$ boundary. There is a positive constant $c$ depending only on the domain $\Omega$ such that if $t/a_*\ge 1/C$ then
\begin{equation*}
    \#\left(\mathbb N^d\cap tA\Omega\right)\le 2^{-d}|\Omega|t^d-ca_*t^{d-1}
\end{equation*}
and
\begin{equation*}
 \#\left(\mathbb Z_+^d\cap tA\Omega\right)\ge 2^{-d}|\Omega|t^d+ca_*t^{d-1}.
\end{equation*}
\end{lemma}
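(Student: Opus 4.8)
The plan is to count lattice points by comparing the counting function to a volume and controlling the error via Poisson summation applied to a smoothed indicator of $tA\Omega$, then extracting from the error term a negative (resp.\ positive) contribution proportional to $a_* t^{d-1}$. First I would reduce the statement about $\mathbb{N}^d$ and $\mathbb{Z}_+^d$ to a single lattice-point estimate over $\mathbb{Z}^d$ by using the symmetry of $\Omega$ with respect to each coordinate hyperplane: because $A\Omega$ is also symmetric, each of the $2^d$ ``quadrant'' translates of the positive cone contributes equally up to boundary terms, so $\#(\mathbb{Z}^d\cap tA\Omega)=2^d\#(\mathbb{N}^d\cap tA\Omega)+(\text{lower-dimensional terms on the coordinate hyperplanes})$. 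The lower-dimensional terms are, up to constants, sums of lattice point counts in the slices $tA_j\Omega_j$, which are $O(t^{d-1})$ with a constant independent of $A$ only after one is careful; in fact one needs them to be comparable to $a_*t^{d-1}$ with the right sign, and this is where the hypothesis $t/a_*\ge 1/C$ enters to guarantee the slices are large enough in at least one direction. So the heart of the matter is a two-sided estimate
\begin{equation*}
\bigl|\#(\mathbb{Z}^d\cap tA\Omega)-|\Omega|t^d\bigr|\lesssim t^{d-1}\bigl(\text{something}\bigr),
\end{equation*}
together with a more precise account of the sign of the discrepancy on the coordinate hyperplanes.

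Next I would set up the smoothing. Let $\psi$ be a nonnegative bump function supported in a ball of radius $\varepsilon$ with $\int\psi=1$, and write $\chi=\mathbf 1_{tA\Omega}$. Then $\chi*\psi_\delta\le \mathbf 1_{(1+c\delta/t)tA\Omega}$ and $\ge \mathbf 1_{(1-c\delta/t)tA\Omega}$ for suitable $\delta$ comparable to $a_*$ (using that $\partial\Omega$ is $C^2$, hence has an interior and exterior ball condition after rescaling, so an $O(\delta)$ inner/outer neighbourhood of $tA\Omega$ is controlled). Applying Poisson summation to $\chi*\psi_\delta$ gives
\begin{equation*}
\sum_{n\in\mathbb{Z}^d}(\chi*\psi_\delta)(n)=\widehat{\chi}(0)+\sum_{m\in\mathbb{Z}^d_*}\widehat{\chi}(m)\,\widehat{\psi_\delta}(m),
\end{equation*}
and $\widehat\chi(0)=|tA\Omega|=|\Omega|t^d$. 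The nonzero-frequency terms are estimated using the classical bound for the Fourier transform of the indicator of a convex body with $C^2$ boundary and nonvanishing curvature — but here we only have strict convexity and $C^2$, so I would instead use the weaker but sufficient bound $|\widehat\chi(m)|\lesssim t^{d-1}a_*^{?}\cdots$; more robustly, one integrates by parts once across the boundary to get $|\widehat\chi(m)|\lesssim (t\,|A^t m|)^{-1}\cdot t^{d}\cdot(\text{surface measure factor})$, and then the Gaussian-like decay of $\widehat{\psi_\delta}(m)$ with $\delta\asymp a_*$ makes the tail sum $\lesssim a_* t^{d-1}$ with a constant depending only on $\Omega$. This yields the volume estimate with a genuinely $O(a_* t^{d-1})$ error, which is the crux, since we must be sure the implied constant does not degenerate as $A$ becomes unbalanced — this is exactly why we track $a_*=\|A^{-1}\|_\infty$ rather than $t^{d-1}$ alone, and why $\delta\asymp a_*$ is the correct smoothing scale (it is the smallest of the $d$ ``thicknesses'' $a_j t$ divided by $t$).

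Finally I would upgrade from a two-sided $O(a_* t^{d-1})$ bound to the one-sided estimates with the constant $c$. The point is that for $\mathbb{N}^d$ (strictly positive coordinates) the coordinate-hyperplane slices are \emph{excluded}, so $\#(\mathbb{N}^d\cap tA\Omega)=2^{-d}\#(\mathbb{Z}^d\cap tA\Omega)-(\text{positive lower-order terms})$, giving a strict loss of order $a_* t^{d-1}$; whereas for $\mathbb{Z}_+^d$ the slices are \emph{included} with multiplicity, producing a strict gain of the same order. Concretely, among $a_1,\dots,a_d$ at least one, say $a_{j_0}$, satisfies $a_{j_0}t\ge t/a_*\ge 1/C$ is false — rather, $\max_j a_j\ge 1$ by $\det A=1$, so the slice $tA_{j_0}\Omega_{j_0}$ in the hyperplane $x_{j_0}=0$ has $(d-1)$-volume $\asymp t^{d-1}$, and along the direction with $a_j^{-1}=a_*$ the body has thickness $\asymp t/a_*$; counting lattice points on that slice and using the two-sided volume estimate for the $(d-1)$-dimensional count shows the correction is at least $c\,a_*t^{d-1}$ for a constant $c=c(\Omega)>0$. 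Assembling: $\#(\mathbb{N}^d\cap tA\Omega)\le 2^{-d}|\Omega|t^d + 2^{-d}C a_* t^{d-1} - c' a_* t^{d-1}\le 2^{-d}|\Omega|t^d - c a_* t^{d-1}$ after choosing the smoothing parameter to make $C$ small relative to $c'$, and symmetrically for the lower bound on $\#(\mathbb{Z}_+^d\cap tA\Omega)$. The main obstacle I anticipate is bookkeeping the uniformity: ensuring every implied constant in the Poisson-summation error and in the slice counts depends only on $\Omega$ (through $C$, the $C^2$ norm of $\partial\Omega$, and the interior/exterior ball radii) and not on the particular unbalanced matrix $A$ — the hypothesis $t/a_*\ge 1/C$ is precisely the threshold making this uniformity hold.
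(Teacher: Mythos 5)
There is a genuine gap, and it sits exactly where you yourself flag ``the crux'': your plan needs a two-sided asymptotic $\#(\mathbb{Z}^d\cap tA\Omega)=|\Omega|t^d+E$ with $E=o(a_*t^{d-1})$ uniformly in $A$ (or at least with a constant provably smaller than the one produced by the coordinate-hyperplane slices), and no such bound is available under the hypotheses of this lemma. The lemma assumes only strict convexity and $C^2$ boundary --- no curvature lower bound, no finite-type condition --- so the only Fourier decay at your disposal is the trivial $|\widehat{\chi_\Omega}(\xi)|\lesssim|\xi|^{-1}$ from one integration by parts; with that, the nonzero-frequency part of the Poisson summation is not $O(a_*t^{d-1})$ for any choice of smoothing scale (shrinking $\varepsilon$ reduces the geometric smoothing error but blows up the Fourier tail, and there is no admissible $\varepsilon$ making both $o(a_*t^{d-1})$). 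Importing the finite-type machinery of the rest of the paper does not rescue the argument either: the error there is $O(a_*^{d^2-d+1}t^{d-1-\delta})$, which beats $a_*t^{d-1}$ only once $a_*$ is known to be bounded --- and the boundedness of $a_*(t)$ is precisely what Lemma \ref{general} is used to establish in Section \ref{sec3}, so within this paper your route is circular. Finally, the closing step ``choosing the smoothing parameter to make $C$ small relative to $c'$'' is not a real mechanism: the volume-count error and the slice gain are of the same order $a_*t^{d-1}$, and nothing in your argument separates their constants. (A smaller slip: the slice carrying the $a_*t^{d-1}$ gain is the one perpendicular to the direction in which $a_j$ is \emph{smallest}, whose $(d-1)$-volume is $\prod_{i\ne j}a_i\,|\Omega_j|t^{d-1}=a_*|\Omega_j|t^{d-1}$, not a slice of volume $\asymp t^{d-1}$.)

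The cited proof (\cite[Proposition 2.1]{GW18}, generalizing Propositions 6 and 9 of Laugesen--Liu) is one-sided and entirely elementary, which is why it survives under such weak hypotheses and for every $t$ with $t/a_*\ge 1/C$ rather than only asymptotically. One compares the lattice sum directly with the volume: to each $k\in\mathbb{N}^d\cap tA\Omega$ one attaches the unit cube $\prod_i[k_i-1,k_i]$; convexity together with the symmetry in each coordinate hyperplane shows these essentially disjoint cubes all lie in $tA\Omega\cap[0,\infty)^d$, so the count is at most $2^{-d}|\Omega|t^d$ minus the volume of the uncovered region, and a concavity (midpoint/trapezoid) comparison carried out one coordinate at a time shows this deficit is at least a constant times the extent of the body in its most stretched direction, i.e.\ at least $ca_*t^{d-1}$. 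The bound for $\mathbb{Z}_+^d$ is the mirror image, with cubes $\prod_i[k_i,k_i+1]$ covering $tA\Omega\cap[0,\infty)^d$ with an excess of the same order. No Fourier analysis, no cancellation, and no sign ambiguity enters, which is the whole point of stating the lemma separately from the asymptotic counting results.
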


We next quote some known results on the decay of the Fourier transform of surface carried measure. In order to state them we briefly recall some notations from \cite[P. 155--156]{ISS02} that are related to the definition of multitype (see also \cite[P. 1270]{schulz}). For a convex compact domain $\Omega\subset\mathbb R^d$ $(d\geq 2)$ with smooth boundary of finite type and an arbitrarily fixed $P\in \partial \Omega$, denote by  $T_P(\partial\Omega)$ the tangent plane (line if $d=2$) of $\partial\Omega$ at $P$. Let $S_P^{m_j}$, $1\leq j\leq k$, be the flag of subspaces of $T_P(\partial\Omega)$, and $W_j$ the orthogonal complement of $S_P^{m_j}$ in $S_P^{m_{j-1}}$, as defined in \cite[P. 155--156]{ISS02}. One can choose an orthonormal basis $\{V_1,\ldots,V_{d-1}\}$ of $T_P(\partial\Omega)$ such that for any $V=\sum_{i=1}^{d-1}x_iV_i $, the equality
\begin{equation*}
\left|\Pi_j^P V\right|^2=\sum_{i=d-\dim S_P^{m_{j-1}}}^{d-1-\dim S_P^{m_j}}x_i^2
\end{equation*}
holds, where $\Pi_j^P$ represents the orthogonal projection on $T_P(\partial \Omega)$ to $W_j$. Here $|\cdot|$ denotes the Euclidean distance in  $W_j$. We notice that  $\left|\Pi_j^P V\right|$ is independent of the choice of the orthonormal basis, hence we can apply \cite[Proposition 1.2]{ISS02} with the above particularly chosen basis. Let $n(P)$ denote the unit exterior normal of $\partial \Omega$ at $P$. We may assume the basis $\{V_1, \ldots,V_{d-1},-n(P)\}$ has the same orientation as $\{e_1,\ldots,e_d \}$. There exists a rotation matrix $\mathrm O=\mathrm{O}(P)$ such that
\begin{equation}\label{def-mathrmO}
    (e_1,\ldots,e_d)=(V_1, \ldots,V_{d-1},-n(P))\mathrm O,
\end{equation}
namely
\begin{equation*}
e_j=\sum_{i=1}^{d-1} \textrm{o}_{ij}V_i-\textrm{o}_{dj} n(P),\quad \textrm{where $\mathrm O=(o_{ij})$}.
\end{equation*}

Let $\mathrm d\sigma$ be the surface measure carried on $\partial \Omega$. The following decay of its Fourier transform is known.

\begin{lemma}[\cite{BNW, randol-1969, ISS02}] \label{decay}
Let $\Omega\subset \mathbb R^d$ $(d\geq 2)$ be a convex compact domain with smooth boundary of finite type and $P\in\partial \Omega$.  Then there is a neighborhood $U_P\subset\partial\Omega$ of $P$ and a conic neighborhood $ V_P\subset\mathbb R^d_*$ of $\{\pm n(P)\}$ such that for all $\chi\in C_0^{\infty}(U_P)$ and all $\xi\in V_P$, we have
\begin{equation*}
\left|\widehat{\chi\mathrm d\sigma}(\xi)\right|\lesssim \min\left\{|\xi|^{-\nu_{\Omega}(P)},|\xi|^{-\frac{1}{2}
-\nu_{\Omega}^{(2)}(P)}\left(\sum_{i=1}^{d-1}
\left(\frac{|\mathrm O_i\xi|}{|\xi|}\right)^{\frac{\mathfrak a_i}{\mathfrak a_i-1}}\right)^{\frac{1}{\mathfrak a_1}-\frac{1}{2}}\right\},
\end{equation*}
where $\nu_{\Omega}(P)$ and $\nu_{\Omega}^{(2)}(P)$ are defined by \eqref{def-nuomegaP} and \eqref{def-nuomegaP2} respectively, $\mathrm O_i$ is the $i$-th row vector of the matrix $\mathrm O=\mathrm{O}(P)$ defined by \eqref{def-mathrmO}, $\mathfrak a=(\mathfrak a_1,\ldots,\mathfrak a_{d-1})$ is the multitype (type if $d=2$) of $\partial \Omega$
at $P$, and the implicit constant may depend on the domain $\Omega$ and  upper bounds of $\chi$ and finitely many derivatives of $\chi$.
\end{lemma}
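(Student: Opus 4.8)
The plan is to localize $\partial\Omega$ near $P$, write $\partial\Omega$ there as a graph over the tangent plane in the coordinates adapted to the flag $S_P^{m_j}$ recalled above, and thereby reduce $\widehat{\chi\mathrm d\sigma}(\xi)$, for $\xi$ in a narrow cone about $\{\pm n(P)\}$, to a scalar oscillatory integral to which the Iosevich--Sawyer--Seeger estimates apply directly. Concretely, I would first change variables by the rotation $\mathrm O=\mathrm O(P)$ of \eqref{def-mathrmO}, whose rows are $V_1,\ldots,V_{d-1},-n(P)$; in these coordinates a neighborhood of $P$ in $\partial\Omega$ is a graph $\{Q(y')=P+\sum_{i=1}^{d-1}y_iV_i-\phi(y')\,n(P):y'\in U\}$ where $\phi$ is smooth with $\phi(0)=0$, $\nabla\phi(0)=0$, convex (since $\Omega$ is convex), of finite type, and --- because $\{V_i\}$ and the flag were chosen as in \cite[P.~155--156]{ISS02} --- in the adapted form whose multitype along the coordinate directions is $\mathfrak a=(\mathfrak a_1,\ldots,\mathfrak a_{d-1})$. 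Since $Q(y')\cdot\xi=P\cdot\xi+\sum_{i=1}^{d-1}y_i\,\mathrm O_i\xi+\phi(y')\,\mathrm O_d\xi$ and $\mathrm d\sigma=\sqrt{1+|\nabla\phi|^2}\,dy'$ on the graph, for $\chi\in C_0^\infty(U_P)$ with $U_P$ small one gets
\begin{equation*}
\bigl|\widehat{\chi\mathrm d\sigma}(\xi)\bigr|=\left|\int_U e^{-2\pi i\left(\sum_{i=1}^{d-1}y_i\,\mathrm O_i\xi+\phi(y')\,\mathrm O_d\xi\right)}\widetilde\chi(y')\,dy'\right|,\qquad\widetilde\chi(y')=\chi(Q(y'))\sqrt{1+|\nabla\phi(y')|^2},
\end{equation*}
with $\widetilde\chi$ smooth, compactly supported, and its $C^k$ norms controlled by those of $\chi$. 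Choosing $V_P$ to be a sufficiently narrow cone about $\{\pm n(P)\}$ forces $|\mathrm O_d\xi|=|n(P)\cdot\xi|\asymp|\xi|$ and $|\mathrm O_i\xi|/|\xi|$ small for $1\le i\le d-1$, and since $|\widehat{\chi\mathrm d\sigma}(-\xi)|=|\widehat{\chi\mathrm d\sigma}(\xi)|$ it suffices to treat $\xi$ near $n(P)$.

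On this range the integral above is of the form $\int e^{i\lambda(\phi(y')+b\cdot y')}\widetilde\chi(y')\,dy'$ with large parameter $\lambda=-2\pi\,\mathrm O_d\xi\asymp|\xi|$ and a small linear term $b$ satisfying $|b_i|=|\mathrm O_i\xi|/|\mathrm O_d\xi|\asymp|\mathrm O_i\xi|/|\xi|$, which is precisely the oscillatory integral estimated in \cite[Proposition 1.2]{ISS02} (applicable with the adapted basis, as noted just before the statement of this lemma); it yields
\begin{equation*}
\bigl|\widehat{\chi\mathrm d\sigma}(\xi)\bigr|\lesssim|\xi|^{-\frac12-\nu_\Omega^{(2)}(P)}\left(\sum_{i=1}^{d-1}\left(\frac{|\mathrm O_i\xi|}{|\xi|}\right)^{\frac{\mathfrak a_i}{\mathfrak a_i-1}}\right)^{\frac1{\mathfrak a_1}-\frac12},
\end{equation*}
the second term in the minimum. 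The first term $|\xi|^{-\nu_\Omega(P)}$ is the classical decay of the Fourier transform of surface measure on a convex finite-type hypersurface near a normal direction \cite{BNW,randol-1969}: on a cone about $n(P)$ one has $|\widehat{\chi\mathrm d\sigma}(\xi)|\lesssim\prod_{i=1}^{d-1}|\xi|^{-1/\mathfrak a_i}=|\xi|^{-\nu_\Omega(P)}$ by \eqref{def-nuomegaP} (the right-hand side being the volume of the nonisotropic ``ball'' of radius $|\xi|^{-1}$ about $n(P)$ at a point of multitype $\mathfrak a$), and this term is what controls the thin set where $\sum_{i=1}^{d-1}(|\mathrm O_i\xi|/|\xi|)^{\mathfrak a_i/(\mathfrak a_i-1)}\lesssim|\xi|^{-1}$ and the previous estimate degenerates. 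Since $\nu_\Omega(P)=\nu_\Omega^{(2)}(P)+\mathfrak a_1^{-1}$, on the complementary set the second bound is itself $\lesssim|\xi|^{-\nu_\Omega(P)}$, so the two estimates are mutually consistent and their minimum is the asserted bound; shrinking $U_P$ and $V_P$ if necessary, all implicit constants depend only on $\Omega$ and on finitely many derivatives of $\chi$.

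The main difficulty is bookkeeping rather than analysis. One must verify that the orthonormal basis $\{V_i\}$ and the rotation $\mathrm O(P)$ are set up in exact agreement with the Iosevich--Sawyer--Seeger normalization, so that the $i$-th tangential frequency entering their estimate is precisely $\mathrm O_i\xi$ carrying the weight $\mathfrak a_i$, and that a single pair $(U_P,V_P)$ serves for both pieces of the minimum with constants uniform over all admissible $\chi$. The substantive oscillatory integral bound itself is already established in \cite{ISS02,BNW,randol-1969}, so no new hard estimate is required.
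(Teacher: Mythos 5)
Your proposal is correct and follows essentially the same route as the paper, which simply attributes the first bound to Bruna--Nagel--Wainger (Theorem B) and the second to Randol's Lemma 1 in dimension two and to Proposition 1.2 of Iosevich--Sawyer--Seeger in higher dimensions; your graph parametrization, the identification of the tangential frequencies with $\mathrm O_i\xi$ via \eqref{def-mathrmO}, and the consistency check using $\nu_{\Omega}(P)=\nu_{\Omega}^{(2)}(P)+\mathfrak a_1^{-1}$ are all sound. The only point to keep in mind is that for $d=2$ the second bound comes from Randol rather than from \cite[Proposition 1.2]{ISS02}, as the paper notes.
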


The first bound on the right hand side is standard, which follows easily from  \cite[P. 335--336, Theorem B]{BNW}. The second one follows from \cite[Lemma 1]{randol-1969} in dimension two and \cite[Proposition 1.2]{ISS02} in higher dimensions.

In the rest of this section we establish results on lattice point counting in stretched finite type domains. Recall that $\nu_{\Omega}$ and $\mu_{\Omega}$ are defined by \eqref{222} and \eqref{333} respectively.

\begin{proposition}\label{LPP}
 Let $\Omega\subset [-C,C]^d\subset\mathbb R^d$ ($d\geq 2$, $C>0$) be a convex compact domain, which contains the origin as an inner point,  with smooth boundary of finite type. If $t/a_*\geq 1/C$,  then
\begin{equation}\label{eLPP}
\#\left(\mathbb Z^d\cap tA\Omega\right)=|\Omega|t^d+O\left(a_*^{d^2-d+1}\left(t^{d-1-\nu_{\Omega}}
+t^{d-1-\frac{\mu_{\Omega}}{d-\mu_{\Omega}}}\right)\right),
\end{equation}
where the implicit constant depends only on the domain $\Omega$.
\end{proposition}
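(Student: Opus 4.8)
The plan is to use the standard Poisson-summation approach to lattice-point counting for smooth convex domains, combined with the Fourier-decay bound of Lemma~\ref{decay}. First I would introduce a Schwartz bump $\phi$ with $\widehat\phi$ compactly supported and nonnegative, form the two mollified counting functions $N_\pm(tA\Omega)=\sum_{n\in\mathbb Z^d}\chi_{tA\Omega}*\phi_{\varepsilon}^{\pm}(n)$ that sandwich $\#(\mathbb Z^d\cap tA\Omega)$ from above and below, where $\phi_\varepsilon^{\pm}$ are dilates at scale $\varepsilon$ (to be chosen as a negative power of $t$), and apply the Poisson summation formula to each. The $n=0$ term yields the main term $|tA\Omega|=|\Omega|t^d$ up to an error $O(\varepsilon t^{d-1}\cdot(\text{surface area of }tA\Omega))=O(\varepsilon a_*^{?} t^{d})$ coming from the mollification; the remaining terms $n\neq 0$ give $\sum_{n\neq 0}\widehat{\chi_{tA\Omega}}(n)\,\widehat{\phi_\varepsilon^{\pm}}(n)$, which must be shown to be small.

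The heart of the matter is estimating $\widehat{\chi_{tA\Omega}}(n)$ for $n\neq 0$. By the divergence theorem one has $\widehat{\chi_{tA\Omega}}(\xi)=O(|\xi|^{-1})\cdot|\widehat{\mathrm d\sigma_{tA\Omega}}(\text{related }\xi)|$-type bounds, so it suffices to control the Fourier transform of the surface measure of the dilated, stretched boundary $\partial(tA\Omega)$. Rescaling reduces this to $\widehat{\mathrm d\sigma}$ on $\partial\Omega$ evaluated at the point $t A^{-1}\xi$ (up to Jacobian factors bounded by powers of $a_*$); here the anisotropy of $A$ is what produces the powers $a_*^{d^2-d+1}$ in the statement, since the worst direction for $A^{-1}$ contributes $a_*$ and the change-of-variables Jacobians and the number of directions/pieces multiply these up. I would then cover $\partial\Omega$ by finitely many patches $U_P$ as in Lemma~\ref{decay}, subordinate a partition of unity $\{\chi_P\}$, and on each patch split the frequency sum according to whether $n/|n|$ lies in the conic neighborhood $V_P$ of $\pm n(P)$ (the ``normal'' directions, where Lemma~\ref{decay} applies) or not (the ``transversal'' directions, where non-stationary phase / repeated integration by parts gives rapid decay $O(|\xi|^{-M})$ for any $M$, easily summable).

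In the normal directions, I would insert the two-part bound from Lemma~\ref{decay}: the first term $|\xi|^{-\nu_\Omega(P)}$ handles the region where all the ratios $|\mathrm O_i\xi|/|\xi|$ are comparable, and the second term, after summing the anisotropic factor $\bigl(\sum_i(|\mathrm O_i\xi|/|\xi|)^{\mathfrak a_i/(\mathfrak a_i-1)}\bigr)^{1/\mathfrak a_1-1/2}$ over the lattice, produces a decay of order $|\xi|^{-\mu_\Omega/(d-\mu_\Omega)}$ roughly speaking — this is the standard interplay that converts a ``flat'' decay rate $\tfrac12+\nu_\Omega^{(2)}(P)$ plus the volume of the degenerate slab into the effective exponent $\mu_\Omega/(d-\mu_\Omega)$ (this exponent is exactly what appears in known $L^2$ lattice-remainder bounds for finite-type domains). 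Summing $|\widehat{\chi_P\mathrm d\sigma}(tA^{-1}n)|$ against the bounded, compactly supported factor $\widehat{\phi_\varepsilon^\pm}(n)$ over $n\in\mathbb Z^d_*$ — the support constraint $|n|\lesssim 1/\varepsilon$ is crucial to make the sum converge when $\nu_\Omega(P)<d$ — yields the bound $O\bigl(a_*^{d^2-d+1}(t^{-\nu_\Omega}+t^{-\mu_\Omega/(d-\mu_\Omega)})\cdot\varepsilon^{-(\cdots)}\bigr)$ after tracking $\varepsilon$-powers. Finally I would optimize $\varepsilon$ in $t$ to balance the mollification error $\varepsilon t^d$ against this frequency-sum error; the resulting exponents are precisely $t^{d-1-\nu_\Omega}$ and $t^{d-1-\mu_\Omega/(d-\mu_\Omega)}$, and collecting the $a_*$-powers from the Jacobians, the finite number of patches, and the cone decomposition gives the stated factor $a_*^{d^2-d+1}$.

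\medskip

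The main obstacle, I expect, is the bookkeeping in the anisotropic lattice sum: one must carry out the summation of the two-part bound of Lemma~\ref{decay} over $n\in\mathbb Z^d_*$ with the rotated coordinates $\mathrm O(P)n$ (with $\mathrm O$ depending on the patch), keeping the dependence on $a_*$ explicit and uniform in $P$, and simultaneously handling the transition between the two regimes in the $\min$. Getting the exponent $\mu_\Omega/(d-\mu_\Omega)$ rather than something weaker requires splitting the sum into dyadic shells in $|n|$ and, within each shell, further dyadically decomposing the anisotropic quantity $\sum_i(|\mathrm O_i n|/|n|)^{\mathfrak a_i/(\mathfrak a_i-1)}$, then summing a geometric-type series; this is where the factor $d-\mu_\Omega$ in the denominator enters, through counting lattice points in a thin slab of thickness governed by the degenerate directions. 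Everything else — Poisson summation, non-stationary phase in transversal directions, the divergence-theorem reduction to surface measure, and the final optimization in $\varepsilon$ — is routine.
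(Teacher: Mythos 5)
Your strategy is essentially the paper's: mollify and sandwich the counting function, apply Poisson summation, reduce $\widehat{\chi_\Omega}$ to the surface measure via the divergence theorem, decompose frequency space into conic neighborhoods of the degenerate normals versus the rest, invoke Lemma~\ref{decay} (resp.\ Bruna--Nagel--Wainger off those cones), sum dyadically, and optimize in $\varepsilon$. Two steps, however, are off in a way that would derail the computation if carried out as written.

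First, the mollification error is $O(a_*\varepsilon t^{d-1})$, not $O(\varepsilon t^{d})$: the sandwich $N_{A,\varepsilon}(t-ca_*\varepsilon)\le\#(\mathbb Z^d\cap tA\Omega)\le N_{A,\varepsilon}(t+ca_*\varepsilon)$ perturbs the main term by $|\Omega|\,\bigl((t+ca_*\varepsilon)^d-t^d\bigr)=O(a_*\varepsilon t^{d-1})$. Balancing $\varepsilon t^{d}$ against the high-frequency error $t^{d-1-\mu_\Omega}\varepsilon^{1+\mu_\Omega-d}$, as you propose, yields $\varepsilon=t^{-(1+\mu_\Omega)/(d-\mu_\Omega)}$ and a remainder $t^{d-(1+\mu_\Omega)/(d-\mu_\Omega)}$, which is strictly worse than $t^{d-1-\mu_\Omega/(d-\mu_\Omega)}$; only the correct balance $a_*\varepsilon t^{d-1}\asymp t^{d-1-\mu_\Omega}\varepsilon^{1+\mu_\Omega-d}$ gives the stated exponent. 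Relatedly, $\mu_\Omega/(d-\mu_\Omega)$ does not arise from ``summing the anisotropic factor over the lattice'': on a shell $|Ak|\asymp\lambda$ that factor averages out and the sum is $O(a_*^{d}\lambda^{d-1-\mu_\Omega})$, so the frequency sum is divergent, must be cut off at $|k|\asymp1/\varepsilon$ by the mollifier, and contributes $\varepsilon^{1+\mu_\Omega-d}$; the denominator $d-\mu_\Omega$ appears only after the optimization in $\varepsilon$. Second, your assignment of the two terms in the $\min$ of Lemma~\ref{decay} is inverted. Since $\frac1{\mathfrak a_1}-\frac12<0$, the anisotropic factor blows up exactly where all the ratios $|\mathrm O_i\xi|/|\xi|$ are small, i.e.\ in a thin tube around the normal line $\mathbb Rn(P)$; there one must switch to the isotropic bound $|\xi|^{-\nu_\Omega(P)}$, and summing $|Ak|^{-1-\nu_\Omega}$ over the lattice points of that tube is precisely what produces the term $t^{d-1-\nu_\Omega}$. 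Off the tube (but still inside $V_P$) the second bound applies and produces the $t^{d-1-\mu_\Omega}\varepsilon^{1+\mu_\Omega-d}$ contribution. Without this further splitting of each cone $V_P$ into a near-axis tube and its complement, neither exponent in \eqref{eLPP} comes out.
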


\begin{remark}
If $A$ is a fixed matrix, the above result is given directly by \cite[Theorem 1.3]{ISS02}. For our need, $A$ is allowed to change. Hence we have to track the impact of $A$ and modify the proof of \cite[Theorem 1.3]{ISS02} accordingly.

We did not try to find the smallest exponent of the $a_*$ term since it does not matter in the study of the optimal stretching problem. Indeed, we will manage to show that $a_*$ is uniformly bounded in Section \ref{sec3} hence the $a_*$ term is bounded by a constant after all.
\end{remark}

\begin{proof}[Proof of Proposition \ref{LPP}]
Let $0\leq \rho\in C_0^{\infty}(\mathbb{R}^d)$ be a cut-off function with
$\supp \rho\subset B(0, 1)$ and $\int_{\mathbb{R}^d}\rho(x) \, \textrm{d}x=1$. Set $\rho_{\varepsilon}(x)= \varepsilon^{-d}\rho(\varepsilon^{-1}x)$, $0<\varepsilon<1$, and
\begin{equation*}
N_{A,\varepsilon}(t)=\sum_{k\in\mathbb{Z}^d}\chi_{tA
\Omega}\ast\rho_{\varepsilon}(k),
\end{equation*}
where $\chi_{tA\Omega}$ denotes the characteristic function of $tA\Omega$. It is a standard result that there exists a constant  $c>0$ depending only on the domain $\Omega$ such that
\begin{equation}
 N_{A,\varepsilon}(t-ca_*\varepsilon) \leq\#\left(\mathbb Z^d\cap tA\Omega\right)
 \leq N_{A,\varepsilon}(t+ca_*\varepsilon). \label{777}
 \end{equation}

By using the Poisson summation formula we have
 \begin{equation}
     N_{A,\varepsilon}(t)=t^{d}\sum_{k\in\mathbb Z^d}\widehat{\chi_{\Omega}}(tAk)\widehat\rho(\varepsilon k)=|\Omega|t^d+R_{A,\varepsilon}(t) \label{888}
 \end{equation}
with
\begin{equation*}
R_{A,\varepsilon}(t)=t^d\sum_{k\in\mathbb{Z}_\ast^d}\widehat{\chi_{\Omega}}(tAk)\widehat{\rho}(\varepsilon k).
\end{equation*}

Let $\Gamma$ denote the set of points $P\in\partial\Omega$ at which all principal curvatures vanish.
It is known that $\Gamma$ is a finite set (see \cite[P. 164]{ISS02}). For each $P\in  \Gamma$, choose an open conic symmetric neighborhood $V_P$  of the normals $\{\pm n(P)\}$. If two points in $\Gamma$ have parallel normals we choose the same conic neighborhood for both of them. We may shrink these neighborhoods so that they are disjoint pairwise.

Let $\dist_{\infty}$ denote the distance taken with respect to the $\ell^{\infty}$ metric in $\mathbb R^d$. For $P\in\Gamma$ let
 \begin{equation*}
     \mathfrak N^1_P=\{x\in V_P:\dist_{\infty}(x,\mathbb{R}n(P))\leq 3/4\},
 \end{equation*}
  \begin{equation*}
 \mathfrak N^2_P=\{x\in V_P: \dist_{\infty}(x,\mathbb Rn(P))> 3/4\}
 \end{equation*}
 and
 \begin{equation*}
 \mathfrak{M}=\{x\in \mathbb{R}^d_* : x\notin \cup_{p\in\Gamma} V_P \}.
 \end{equation*}

To estimate $R_{A,\varepsilon}(t)$ we just need to estimate
\begin{equation*}
S_P^1=t^d\sum_{k\in A^{-1}\mathfrak N^1_P}\widehat{\chi_{\Omega}}(tAk)\widehat\rho(\varepsilon k),
\end{equation*}
\begin{equation*}
S_P^2=t^d\sum_{k\in A^{-1}\mathfrak N^2_P}\widehat{\chi_{\Omega}}(tAk)\widehat\rho(\varepsilon k)
\end{equation*}
and
\begin{equation*}
S_0=t^d\sum_{k\in A^{-1}\mathfrak{M}}\widehat{\chi_{\Omega}}(tAk)\widehat\rho(\varepsilon k).
\end{equation*}

To the sum $S_P^1$ we apply the bound
\begin{equation}
\widehat{\chi_{\Omega}}(\xi)\lesssim |\xi|^{-1-\nu_{\Omega}} \quad \textrm{if $\xi\in\mathfrak N^1_P$}.\label{111}
\end{equation}
To verify this bound, by the divergence theorem, we have
\begin{equation*}
\widehat{\chi_{\Omega}}(\xi)= \frac{i}{|\xi|} \int_{\partial\Omega} \frac{\xi}{|\xi|}\cdot n(x)e^{-2\pi i \langle x, \xi \rangle }\,\textrm{d}\sigma(x).
\end{equation*}
Let $P'\in \partial\Omega$ be the boundary point whose outward normal is along $-n(P)$. Following a standard argument from the oscillatory integral theory, we split the above integral over $\partial\Omega$ into three parts over a neighborhood about $P$, a neighborhood about $P'$ and the rest respectively. The former two parts are of size $O(|\xi|^{-\nu_{\Omega}})$, both yielded by the first bound of Lemma \ref{decay} (we may shrink the conic neighborhood $V_p$ if necessary). The third part is of size $O(|\xi|^{-N})$, given by a simple integration by parts. The bound \eqref{111} then follows easily.

Applying \eqref{111} yields
\begin{equation*}
S_P^1 \lesssim t^{d-1-\nu_{\Omega}}\sum_{k\in A^{-1}\mathfrak N^1_{P}}|Ak|^{-1-\nu_{\Omega}}.
\end{equation*}
We split the above sum on the right into two sums depending on whether $|\langle n(P), Ak\rangle |$ is $>C$ or $\leq C$ for an absolute constant $C$. If $C$ is large, a comparison with an integral yields that the sum with $|\langle n(P), Ak\rangle |>C$ is
\begin{equation*}
\lesssim a_*^d \int_{\mathcal{T}}|x|^{-1-\nu_{\Omega}}\,\mathrm{d}x \lesssim a_*^d,
\end{equation*}
where $\mathcal{T}$ represents a tubular neighborhood of a line away from the origin. Trivial estimate gives that the sum with $|\langle n(P), Ak\rangle |\leq C$  is
\begin{equation*}
\lesssim a_*^{d-1}a_*^{1+\nu_{\Omega}}=a_*^{d+\nu_{\Omega}}.
\end{equation*}
Therefore
\begin{equation}
S_P^1\lesssim a_*^{d+\nu_{\Omega}}t^{d-1-\nu_{\Omega}}. \label{444}
\end{equation}

%%%%%%%%%%%%%%%%%%%%%%%%%%%%%%%%%%%%%%%%%%%%%%%%%%%%%%%%%%%%%%%%%%%%%%%%%%%%%%%%%%%%%%%%%%%%%%%%%%%%%%%%%

To the sum $S_0$ we apply the bound
\begin{equation*}
\widehat{\chi_{\Omega}}(\xi)\lesssim |\xi|^{-1-\mu_{\Omega}} \quad \textrm{if $\xi\in\mathfrak{M}$},
\end{equation*}
which follows from the divergence theorem, the Bruna-Nagel-Wainger estimate (in \cite{BNW}) and an integration by parts argument. Hence
\begin{align}
S_0 &\lesssim t^d\sum_{k\in A^{-1}\mathfrak{M}}|tAk|^{-1-\mu_{\Omega}}\left|\widehat\rho(\varepsilon k)\right|\nonumber\\
     &\lesssim a_*^{1+\mu_{\Omega}}t^{d-1-\mu_{\Omega}}\varepsilon^{1+\mu_{\Omega}-d}. \label{666}
 \end{align}

%%%%%%%%%%%%%%%%%%%%%%%%%%%%%%%%%%%%%%%%%%%%%%%%%%%%%%%%%%%%%%%%%%%%%%%%%%%%%%%%%%%%%%%%%%%%%%%%%%%%%%%%%

For the sum $S_P^2$ we handle $\widehat{\chi_{\Omega}}(\xi)$, $\xi\in\mathfrak N^2_P$, similarly as in the proof of \eqref{111},
except that we now use the second bound of Lemma \ref{decay}. As before, the estimate of $\widehat{\chi_{\Omega}}$ is reduced to the Fourier transform of the surface carried measure $\textrm{d}\sigma$, which is then split into three parts. We apply the second bound of Lemma \ref{decay} to the first part (over a neighborhood about $P$).
We may assume $P'\notin \Gamma$ without loss of generality, thus the second part (over a neighborhood about $P'$) is of size $O(|\xi|^{-\mu_{\Omega}})$ by the Bruna-Nagel-Wainger estimate. The third part is of size $O(|\xi|^{-N})$ by integration by parts. To conclude we obtain the bound
\begin{equation*}
\widehat{\chi_{\Omega}}(\xi)\lesssim |\xi|^{-1
    -\mu_{\Omega}}\left(\sum_{l=1}^{d-1}\left(\frac{|\mathrm O_l\xi|}{|\xi|}\right)^{\mathfrak a_l'}\right)^{\frac{1}{\mathfrak a_1}-\frac{1}{2}} \quad \textrm{if $\xi\in\mathfrak N^2_P$},
\end{equation*}
where $\mathfrak a_l'=\mathfrak a_l/(\mathfrak a_l-1)$ with $(\mathfrak a_1,  \ldots, \mathfrak a_{d-1})$ the multitype (type if $d=2$) of $\partial \Omega$ at $P$, and $\mathrm O_l$ is the $l$-th row vector of the matrix $\mathrm O=\mathrm{O}(P)$ defined by \eqref{def-mathrmO}.

Applying the above bound yields
\begin{equation*}
S_P^2 \lesssim   t^{d-1
    -\mu_{\Omega}}\!\!\sum_{k\in A^{-1}\mathfrak N^2_P}\!\!|Ak|^{-1
    -\mu_{\Omega}}\left(\sum_{l=1}^{d-1}\left(\frac{|\mathrm O_lAk|}{|Ak|}\right)^{\mathfrak a_l'}\right)^{\!\!\!\frac{1}{\mathfrak a_1}-\frac{1}{2}}\!\!(1+|\varepsilon k|)^{-N}.
\end{equation*}
Notice that
\begin{equation*}
4\le\mathfrak a_1\le \mathfrak a_2\le\cdots\le \mathfrak a_{d-1},
\end{equation*}
\begin{equation*}
|k|=|A^{-1}Ak|\ge a_*^{1-d}|Ak|
\end{equation*}
 and if $\xi\in \mathfrak N^2_P$ (i.e. $\xi\in V_P$ and $\dist_{\infty}(\xi,\mathbb Rn(P))> 3/4$) and $V_P$ is sufficiently narrow then $|\xi|\geq 1$. A dyadic decomposition on the size of $|Ak|$ then yields
\begin{equation*}
S_P^2\lesssim \sum_{s=0}^{\infty}\frac{t^{d-1
    -\mu_{\Omega}}}{\left(1+ a_*^{1-d}\varepsilon 2^{s}\right)^{N}}\!\!\!\!\sum_{\substack{k\in A^{-1}\mathfrak N^2_P\\2^s\le |Ak|<2^{s+1}}}\!\!\!\!\!|Ak|^{-1-\mu_{\Omega}}\left(\sum_{l=1}^{d-1}
    \left(\frac{|\mathrm O_lAk|}{|Ak|}\right)^{\mathfrak a_1'}\right)^{\!\!\!\!\frac{1}{\mathfrak a_1}-\frac{1}{2}}.
\end{equation*}

We  claim that
\begin{equation}\label{claimn2p}
\sum_{\substack{k\in A^{-1}\mathfrak N^2_P\\ |Ak|\asymp \lambda}}|Ak|^{-1
    -\mu_{\Omega}}\left(\sum_{l=1}^{d-1}
   \left(\frac{|\mathrm O_lAk|}{|Ak|}\right)^{\mathfrak a_1'}\right)^{\frac{1}{\mathfrak a_1}-\frac{1}{2}}
    \lesssim \! a_*^d\lambda^{d-1 -\mu_{\Omega}}.
\end{equation}
We will prove \eqref{claimn2p} later. Using  \eqref{claimn2p} we get
\begin{align}
S_P^2 &\lesssim a_*^d t^{d-1
    -\mu_{\Omega}}\sum_{s=0}^{\infty}\frac{(2^s)^{d-1 -\mu_{\Omega}}}{\left(1+ a_*^{1-d}\varepsilon 2^{s}\right)^{N}} \nonumber \\
  &\lesssim a_*^{(d-1)(d-1-\mu_{\Omega})+d}t^{d-1-\mu_{\Omega}}\varepsilon^{1+\mu_{\Omega}-d}.\label{555}
\end{align}

Using bounds \eqref{444}, \eqref{666} and \eqref{555}, we obtain that
\begin{equation}
R_{A,\varepsilon}(t)
\lesssim a_*^{(d-1)(d-1-\mu_{\Omega})+d}\left(t^{d-1-\nu_{\Omega}}+t^{d-1-\mu_{\Omega}}\varepsilon^{1+\mu_{\Omega}-d}\right).\label{999}
\end{equation}

If $t/a_*$ is sufficiently large, we take
\begin{equation*}
\varepsilon=(t/a_*)^{-\frac{\mu_{\Omega}}{d-\mu_{\Omega}}}.
\end{equation*}
Then $t/2\leq t\pm ca_*\varepsilon\leq 3t/2$. Combining \eqref{777}, \eqref{888} and \eqref{999} yields
\begin{align*}
\left|\#\left(\mathbb Z^d\cap tA\Omega\right)-|\Omega|t^d\right|&\lesssim a_*t^{d-1}\varepsilon+|R_{A,\varepsilon}(t\pm ca_*\varepsilon)|\\
                     &\lesssim a_*^{d^2-d+1}\left(t^{d-1-\nu_{\Omega}}+t^{d-1-\frac{\mu_{\Omega}}{d-\mu_{\Omega}}}\right),
\end{align*}
which is \eqref{eLPP}.

If $t/a_*\asymp 1$ then $ta_i\geq t/a_* \gtrsim 1$. Note that  $tA\Omega$ is contained in an enlarged rectangular box with side lengths $O(ta_1)$, \ldots, $O(ta_d)$. Since such a box contains at most $O(t^d)$ lattice points by trivial estimate, we get
\begin{equation*}
\#\left(\mathbb Z^d\cap tA\Omega\right)\lesssim t^d,
\end{equation*}
which leads to \eqref{eLPP} trivially. This finishes the proof.
\end{proof}

\begin{proof}[Proof of \eqref{claimn2p}]
If $\xi \in \mathfrak N^2_{P}$ and $\dist_{\infty}(\xi, y)\leq 1/(2a_*)$ then  $|\xi|\asymp |y|$ and
\begin{equation*}
\|(\mathrm O_1 \xi,\ldots, \mathrm O_{d-1} \xi)\|_{\ell^{\infty}}=\dist_{\infty}(\xi,\mathbb Rn(P))>3/4,
\end{equation*}
where we have used the definition \eqref{def-mathrmO} of the matrix $\mathrm O$ to obtain the above equality. As a consequence we have
\begin{equation*}
\sum_{l=1}^{d-1}\left(\frac{|\mathrm O_l\xi|}{|\xi|}\right)^{\mathfrak a_1'}\asymp\sum_{l=1}^{d-1}\left(\frac{|\mathrm O_l y|}{|y|}\right)^{\mathfrak a_1'}.
\end{equation*}

Denote by $C_{\xi}$ the open cube in $\mathbb{R}^d$ with center $\xi$, side length $1/a_*$ and all sides parallel to coordinate axes. It is clear that $\{C_{Ak}: k\in\mathbb{Z}^d\}$ are disjoint cubes. Comparing the sum on the left side of \eqref{claimn2p} with an integral, followed by a proper rotation, yields that
\begin{align*}
  &\sum_{\substack{k\in A^{-1}\mathfrak N^2_{P}\\ |Ak|\asymp \lambda}}|Ak|^{-1-\mu_{\Omega}}\left(\sum_{l=1}^{d-1}
   \left(\frac{|\mathrm O_lAk|}{|Ak|}\right)^{\mathfrak a_1'}\right)^{\frac{1}{\mathfrak a_1}-\frac{1}{2}}\\
   &\lesssim a_*^d\int_{|y_d|\asymp \lambda}\int_{|y'|\lesssim\lambda}|y|^{-1-\mu_{\Omega}}
   \left(\sum_{l=1}^{d-1}
   \left(\frac{|y_l|}{|y|}\right)^{\mathfrak a_1'}\right)^{\frac{1}{\mathfrak a_1}-\frac{1}{2}}\,\mathrm{d}y'\mathrm{d}y_d\\
   &\lesssim a_*^d\lambda^{d-1-\mu_{\Omega}},
\end{align*}
as desired.
\end{proof}

Recall that in Section \ref{introduction} we denote by $\Omega_j\subset\mathbb{R}^d$ the intersection of $\Omega$ with the coordinate hyperplane $x_j=0$ and by $|\Omega_j|$ the $(d-1)$-dimensional measure of $\Omega_j$. We sometimes naturally treat $\Omega_j$ as a subset of $\mathbb R^{d-1}$. The following result on the number of lattice points in
$tA \cup_{j=1}^d\Omega_j$ is a consequence of the previous proposition.

\begin{proposition} \label{Corunionforhyper}
 Let $\Omega\subset [-C,C]^d\subset\mathbb R^d$ ($d\geq 2$, $C>0$) be a convex compact domain, which contains the origin as an inner point,  with smooth boundary of finite type. If $t/a_*\geq 1/C$, then
\begin{equation} \label{unionforhyper}
\begin{split}
&\#\left(\mathbb{Z}^d\cap tA \bigcup_{j=1}^d\Omega_j \right)\\
&=\sum_{j=1}^da_j^{-1}|\Omega_j|t^{d-1}+ O\left(  a_*^{d^2-d+1}\left( t^{d-1-\nu_{\Omega}}+t^{d-1-\frac{\mu_{\Omega}}{d-\mu_{\Omega}}}\right)\right),
\end{split}
\end{equation}
where the implicit constant depends only on the domain $\Omega$.
\end{proposition}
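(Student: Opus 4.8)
The plan is to reduce the count over the ``cross'' $\bigcup_{j}\Omega_j$ to a sum of $(d-1)$-dimensional lattice point counts, each controlled by Proposition \ref{LPP}, and then to reassemble the union by inclusion--exclusion.

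For a fixed $j$, a point $k\in\mathbb Z^d$ lies in $tA\Omega_j=tA\Omega\cap\{x_j=0\}$ if and only if $k_j=0$; discarding the $j$-th coordinate this gives $\#(\mathbb Z^d\cap tA\Omega_j)=\#\big(\mathbb Z^{d-1}\cap t\,a_j^{-1/(d-1)}A^{(j)}\Omega_j^{\flat}\big)$, where $\Omega_j^{\flat}\subset\mathbb R^{d-1}$ is $\Omega_j$ viewed as a subset of $\mathbb R^{d-1}$ and $A^{(j)}=a_j^{1/(d-1)}\diag(a_i:i\neq j)$, which has determinant $1$ since $\det A=1$. I would then verify that $\Omega_j^{\flat}$ satisfies the hypotheses of Proposition \ref{LPP}: it is convex, compact, contained in $[-C,C]^{d-1}$, and contains the origin in its relative interior. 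For smoothness and finite type of $\partial\Omega_j^{\flat}$ one uses the symmetry of $\Omega$: reflection in $\{x_j=0\}$ fixes each $P\in\partial\Omega\cap\{x_j=0\}$ and sends $n(P)$ to its reflection, forcing $n(P)\perp e_j$ there; hence $\{x_j=0\}$ is transversal to $\partial\Omega$ along the slice, $\partial\Omega_j^{\flat}$ is a smooth hypersurface in $\{x_j=0\}$, and since every line tangent to $\partial\Omega_j^{\flat}$ is tangent to $\partial\Omega$ with the same order of contact, $\partial\Omega_j^{\flat}$ is of finite type. Writing $a_*^{(j)}=a_j^{-1/(d-1)}\max_{i\neq j}a_i^{-1}$ for $\|(A^{(j)})^{-1}\|_\infty$, one checks $t\,a_j^{-1/(d-1)}/a_*^{(j)}=t/\max_{i\neq j}a_i^{-1}\geq t/a_*\geq 1/C$, so the hypothesis of Proposition \ref{LPP} carries over.

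For $d\geq 3$, applying Proposition \ref{LPP} in dimension $d-1$ to $\Omega_j^{\flat}$ with parameter $t'=t\,a_j^{-1/(d-1)}\asymp t$ produces the main term $|\Omega_j^{\flat}|(t')^{d-1}=a_j^{-1}|\Omega_j|t^{d-1}$, which sums over $j$ to the claimed main term, plus an error built from $(t')^{d-2-\nu_{\Omega_j^{\flat}}}$ and $(t')^{d-2-\mu_{\Omega_j^{\flat}}/((d-1)-\mu_{\Omega_j^{\flat}})}$. Both exponents lie in $[0,d-2]$, and since $\mu_\Omega\le(d-1)/2<d/2$ one has $\mu_\Omega/(d-\mu_\Omega)<1$, hence $d-2\le d-1-\mu_\Omega/(d-\mu_\Omega)$; combined with the crude bounds $a_*^{(j)}\le a_*^{d/(d-1)}$ and $a_j^{-1/(d-1)}\le a_*$, a short arithmetic check shows each slice error is $O\big(a_*^{\,d^2-d+1}t^{\,d-1-\mu_\Omega/(d-\mu_\Omega)}\big)$, which fits inside the error claimed in \eqref{unionforhyper}. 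When $d=2$ each $\Omega_j$ is a segment on a coordinate axis and the elementary count gives $\#(\mathbb Z^2\cap tA\Omega_j)=a_j^{-1}|\Omega_j|t+O(1)$, with the $O(1)$ absorbed.

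Finally I would expand the count by inclusion--exclusion over $\{1,\dots,d\}$: the $m$-fold intersections are $tA\Omega_J$ with $\Omega_J=\Omega\cap\bigcap_{j\in J}\{x_j=0\}$, and for $|J|=m\ge 2$ the set $tA\Omega_J$ lies in a box and so contains at most $O\big(a_*^{d}t^{d-m}\big)=O\big(a_*^{\,d^2-d+1}t^{d-2}\big)$ lattice points, again within the stated error. This gives \eqref{unionforhyper} once $t/a_*$ is large; the complementary range $t/a_*\asymp1$ is dispatched trivially, exactly as in the proof of Proposition \ref{LPP}, via $\#(\mathbb Z^d\cap tA\bigcup_j\Omega_j)\le\sum_j\prod_{i\neq j}(ta_i+1)\lesssim a_*t^{d-1}$. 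The only genuinely delicate step is the middle one---confirming that each slice $\Omega_j^{\flat}$ is indeed a convex finite-type domain to which Proposition \ref{LPP} applies---which is precisely where the coordinate symmetry of $\Omega$ is used; everything else is routine bookkeeping of powers of $a_*$.
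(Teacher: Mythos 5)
Your proposal follows essentially the same route as the paper: reduce to the $(d-1)$-dimensional counts $\#(\mathbb Z^{d-1}\cap tA_j\Omega_j)$ via the determinant-one rescaling $a_j^{1/(d-1)}A_j$ and Proposition \ref{LPP}, then control the overlaps of the hyperplane slices trivially by box counts (the paper uses the two-sided Bonferroni sandwich with only pairwise intersections, you use full inclusion--exclusion; this is immaterial). Two remarks. First, a small but genuine misstep in justification: this proposition does \emph{not} assume $\Omega$ is symmetric, so your reflection argument for transversality of $\{x_j=0\}$ with $\partial\Omega$ is not available; the needed fact ($n(P)$ not parallel to $e_j$ at points of $\partial\Omega\cap\{x_j=0\}$) instead follows because the supporting hyperplane at a boundary point cannot contain the interior point $0$. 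Second, your error bookkeeping is actually a simplification of the paper's: you absorb the slice term $t^{d-2-\nu_{\Omega_j}}$ into $t^{d-2}\le t^{d-1-\mu_\Omega/(d-\mu_\Omega)}$ (valid since $\nu_{\Omega_j}>0$ and $\mu_\Omega\le(d-1)/2$), whereas the paper keeps that term sharp by invoking the multitype comparison of Lemma \ref{factmultitype} ($1+\nu_{\Omega_j}(P)>\nu_\Omega(P)$, hence $d-2-\nu_{\Omega_j}<d-1-\nu_\Omega$). Since the stated error in \eqref{unionforhyper} is a sum of both terms, your cruder bound suffices and makes the appendix lemma unnecessary for this proposition; the power count for $a_*$ also checks out as you indicate.
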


\begin{proof}
For $1\leq j\neq l\leq d$, let $A_j$ be the $(d-1)\times(d-1)$ matrix obtained from $A$ by deleting the $j$-th row and column, and $A_{j,l}$ be the $(d-2)\times(d-2)$ matrix  by deleting the $j$-th and $l$-th rows and columns. Denote by  $\Omega_{j,l}$ the intersection of $\Omega$ with the hyperplane $x_j=0$ and $x_l=0$. We sometimes treat $\Omega_{j,l}$ as a subset of  $\mathbb R^{d-2}$. Hence $A_j\Omega_j$ and $A_{j,l}\Omega_{j,l}$ make sense.

It is geometrically evident that
\begin{align*}
&\sum_{j=1}^d\#\left(\mathbb{Z}^{d-1}\cap tA_j\Omega_j\right)-\sum_{1\leq j<l \le d}\#\left(\mathbb{Z}^{d-2}\cap tA_{j,l}\Omega_{j,l}\right)\\
&\le    \#\left(\mathbb{Z}^d\cap tA\bigcup_{j=1}^d\Omega_j\right)
\le\sum_{j=1}^d\#\left(\mathbb{Z}^{d-1}\cap tA_j\Omega_j\right).
\end{align*}
Hence it suffices to find the asymptotics of $\# (\mathbb{Z}^{d-1}\cap tA_j\Omega_j )$ (by Proposition  \ref{LPP})
and estimate the size of $\# (\mathbb{Z}^{d-2}\cap tA_{j,l}\Omega_{j,l} )$. Combining with the above inequality, we will then get the desired asymptotics \eqref{unionforhyper}.

If $d\ge3$,  applying Proposition \ref{LPP} to the domain $\Omega_j\subset\mathbb{R}^{d-1}$ yields
\begin{align*}
  &\quad \left|\#\left(\mathbb{Z}^{d-1}\cap tA_j\Omega_j\right)-a_j^{-1}|\Omega_j|t^{d-1}\right|\\
&=\left|\#\left(\mathbb{Z}^{d-1}\cap \left(t a_j^{-\frac{1}{d-1}}\right)\left(a_j^{\frac{1}{d-1}}A_j\right)\Omega_j\right)-a_j^{-1}|\Omega_j|t^{d-1}\right|\\
&\lesssim a_*^{d^2-d+1}\left(t^{d-2-\nu_{\Omega_j }}+t^{d-2-\frac{\mu_{\Omega_j}}{d-1-\mu_{\Omega_j}}}\right),
\end{align*}
where
\begin{equation*}
 \nu_{\Omega_j}=\min_{P\in\partial\Omega_j}\nu_{\Omega_j}(P)\quad
 \text{and}\quad \mu_{\Omega_j}=1/2+\min_{P\in\partial\Omega_j}\nu^{(2)}_{\Omega_j}(P).
\end{equation*}
By Lemma \ref{factmultitype} we have
\begin{equation*}
1+\nu_{\Omega_j}(P)>\nu_{\Omega}(P) \quad \textrm{for any $P\in \partial\Omega_j$},
\end{equation*}
which gives
\begin{equation*}
    d-2-\nu_{\Omega_j}< d-1-\nu_{\Omega}.
\end{equation*}
We also have
\begin{equation*}
 d-2 -\frac{\mu_{\Omega_j}}{d-1-\mu_{\Omega_j}}<d-2< d-1-\frac{\mu_{\Omega}}{d-\mu_{\Omega}}
\end{equation*}
since $\mu_{\Omega}\le (d-1)/2$. We thus readily get
\begin{equation}
\begin{split}\label{dimd1}
&\quad \left|\#\left(\mathbb{Z}^{d-1}\cap tA_j\Omega_j\right)-a_j^{-1}|\Omega_j|t^{d-1}\right|\\
&\lesssim a_*^{d^2-d+1} \left(t^{d-1-\nu_{\Omega}}+t^{d-1-\frac{\mu_{\Omega}}{d-\mu_{\Omega}}}\right).
\end{split}
\end{equation}
Note that \eqref{dimd1} holds trivially if $d=2$. This provides the asymptotics of $\# (\mathbb{Z}^{d-1}\cap tA_j\Omega_j )$ we need.

As to the size of $\# (\mathbb{Z}^{d-2}\cap tA_{j,l}\Omega_{j,l} )$, we observe that $tA_{j,l}\Omega_{j,l}$ (as a subset of $\mathbb{R}^{d-2}$) is contained in a rectangular box with side lengths $O(ta_1)$,\ldots, $O(ta_{j-1})$,$O(ta_{j+1})$,\ldots, $O(ta_{l-1})$, $O(ta_{l+1})$, \ldots, $O(ta_{d})$. We also know that $ta_i\ge t/a_*\ge 1/C$ and $\det A=1$. By trivial estimate we have
\begin{equation}\label{dimd2}
\begin{split}
\#\left(\mathbb Z^{d-2}\cap tA_{j,l}\Omega_{j,l}\right)&\lesssim (a_ja_l)^{-1} t^{d-2}\\
 &\lesssim a_*^{d^2-d+1} \left(t^{d-1-\nu_{\Omega}}+t^{d-1-\frac{\mu_{\Omega}}{d-\mu_{\Omega}}}\right).
\end{split}
\end{equation}
This provides the size of $\# (\mathbb{Z}^{d-2}\cap tA_{j,l}\Omega_{j,l} )$, thus finishes the proof.
\end{proof}

%%%%%%%%%%%%%%%%%%%%%%%%%%%%%%%%%%%%%%%%%%%%%%%%%%%%%%%%%%%%%%%%%%%%%%%%%%%%%%%%%%%%%%%%%

\begin{theorem}\label{latticeInpositive}
Let $\Omega\subset [-C,C]^d\subset\mathbb R^d$ ($d\geq 2$, $C>0$) be a convex compact domain with smooth boundary of finite type that is symmetric with respect to each coordinate hyperplane (axis if $d=2$). If $t/a_*\ge1/C$, then
\begin{equation}\label{numpositive}
\begin{split}
\#\left(\mathbb{N}^d\cap tA\Omega\right)&=2^{-d}|\Omega|t^d-2^{-d}\sum_{j=1}^da_j^{-1}|\Omega_j|t^{d-1}\\
      &\quad+O\left( a_*^{d^2-d+1}\left(t^{d-1-\nu_{\Omega }}+t^{d-1-\frac{\mu_{\Omega}}{d-\mu_{\Omega}}}\right)\right)
\end{split}
\end{equation}
and
\begin{equation}\label{numnonnega}
\begin{split}
\#\left(\mathbb{Z}^d_+\cap tA\Omega\right)&=2^{-d}|\Omega|t^d+2^{-d}\sum_{j=1}^da_j^{-1}|\Omega_j|t^{d-1}\\
&\quad+O\left( a_*^{d^2-d+1}\left(t^{d-1-\nu_{\Omega }}+t^{d-1-\frac{\mu_{\Omega}}{d-\mu_{\Omega}}}\right)\right),
\end{split}
\end{equation}
where implicit constants depend only on the domain $\Omega$.
\end{theorem}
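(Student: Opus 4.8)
The plan is to reduce the symmetric count in the positive orthant to the full-space count already handled in Proposition \ref{LPP}, together with the union-of-slices count from Proposition \ref{Corunionforhyper}, via an inclusion–exclusion over the coordinate hyperplanes. Because $\Omega$ is symmetric with respect to each coordinate hyperplane, the domain $tA\Omega$ is likewise symmetric under each reflection $x_j\mapsto -x_j$. Hence the lattice points of $\mathbb Z^d$ inside $tA\Omega$ split according to which coordinates vanish and the signs of the nonvanishing ones; the count of points in a given ``open orthant'' stratum is independent of the choice of signs. Writing $N_0 = \#(\mathbb{Z}^d \cap tA\Omega)$ and, for each subset $S\subseteq\{1,\dots,d\}$, letting $N_S$ be the number of lattice points in $tA\Omega$ whose coordinates indexed by $S$ vanish and whose remaining coordinates are all nonzero, symmetry gives that there are exactly $2^{d-|S|}$ sign-patterns contributing equally. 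Thus
\begin{equation*}
\#\bigl(\mathbb{N}^d\cap tA\Omega\bigr) = \sum_{S\subseteq\{1,\dots,d\}} (-1)^{|S|}\,2^{-(d-|S|)}\,M_S, \qquad
\#\bigl(\mathbb{Z}_+^d\cap tA\Omega\bigr) = \sum_{S\subseteq\{1,\dots,d\}} 2^{-(d-|S|)}\,M_S,
\end{equation*}
where $M_S = \#(\mathbb Z^d\cap tA\Omega \text{ with coordinates in } S \text{ vanishing})$ counts all lattice points in the section of $tA\Omega$ by $\bigcap_{j\in S}\{x_j=0\}$, regardless of the signs of the other coordinates. (More cleanly: $\#(\mathbb N^d\cap tA\Omega)$ is obtained from $N_0=M_\emptyset$ by removing, via inclusion–exclusion, the contributions of lattice points with at least one vanishing coordinate, each such contribution being a lower-dimensional count that itself enjoys the same symmetry.)

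The two top-order terms come from $S=\emptyset$ and $|S|=1$. For $S=\emptyset$, Proposition \ref{LPP} gives $M_\emptyset = |\Omega|t^d + O(a_*^{d^2-d+1}(t^{d-1-\nu_\Omega}+t^{d-1-\mu_\Omega/(d-\mu_\Omega)}))$, which after the factor $2^{-d}$ yields the main term $2^{-d}|\Omega|t^d$. For the singletons, one needs $\sum_{j=1}^d M_{\{j\}}$, i.e. the number of lattice points in the union of the coordinate-hyperplane sections $tA\bigcup_{j}\Omega_j$ — precisely the quantity estimated in Proposition \ref{Corunionforhyper}, namely $\sum_j a_j^{-1}|\Omega_j|t^{d-1}$ up to the same error. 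The sign $-2^{-d}$ (resp.\ $+2^{-d}$) in front of this sum then falls out of the inclusion–exclusion coefficient $(-1)\cdot 2^{-(d-1)}\cdot\frac12 = -2^{-d}$ (resp.\ $2^{-(d-1)}\cdot\frac12 = 2^{-d}$ after correcting for the overcount of points on several hyperplanes; the point is that the overcounted lower strata are absorbed into the error term).

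All remaining strata, those with $|S|\ge 2$, are error terms: the section $\Omega_{j,l,\dots}$ is a convex body of dimension $d-|S|\le d-2$, and as in the proof of Proposition \ref{Corunionforhyper} (see \eqref{dimd2}) it is contained in a box of volume $O(t^{d-|S|}\prod_{i\in S}a_i^{-1})$, which together with $ta_i\ge 1/C$ and $\det A = 1$ gives $M_S \lesssim a_*^{d^2-d+1}(t^{d-1-\nu_\Omega}+t^{d-1-\mu_\Omega/(d-\mu_\Omega)})$; summing over the finitely many such $S$ preserves the bound. Collecting the three pieces — the volume term from $M_\emptyset$, the hyperplane term from $\sum_j M_{\{j\}}$ via Proposition \ref{Corunionforhyper}, and the negligible higher-codimension strata — yields both \eqref{numpositive} and \eqref{numnonnega}. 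The one point requiring care, rather than a genuine obstacle, is bookkeeping the inclusion–exclusion so that the overcounting on intersections of $\ge 2$ hyperplanes is consistently relegated to the error term and the coefficient $2^{-d}$ on the $t^{d-1}$ term emerges with the correct sign in each of the two cases; everything else is a direct substitution of the two preceding propositions.
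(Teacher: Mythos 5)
Your overall strategy---reduce the orthant counts to full lattice counts in the coordinate sections via the reflection symmetries, feed in Propositions \ref{LPP} and \ref{Corunionforhyper}, and discard the codimension-$\ge 2$ strata by the trivial bound \eqref{dimd2}---is exactly the paper's, and those parts of your write-up are fine. The genuine problem is that both of your displayed inclusion--exclusion identities are false as written. With $M_S$ the number of lattice points of $tA\Omega$ whose coordinates indexed by $S$ vanish (the others unrestricted) and $N_T$ the number whose set of vanishing coordinates is \emph{exactly} $T$, the symmetry gives $\#(\mathbb N^d\cap tA\Omega)=2^{-d}N_\emptyset$ and $\#(\mathbb Z_+^d\cap tA\Omega)=\sum_T 2^{-(d-|T|)}N_T$; M\"obius inversion of $M_S=\sum_{T\supseteq S}N_T$ then yields
\begin{equation*}
\#\bigl(\mathbb N^d\cap tA\Omega\bigr)=2^{-d}\sum_{S}(-1)^{|S|}M_S,
\qquad
\#\bigl(\mathbb Z_+^d\cap tA\Omega\bigr)=2^{-d}\sum_{S}M_S,
\end{equation*}
i.e.\ the coefficient is $2^{-d}$ for \emph{every} $S$, not $2^{-(d-|S|)}$. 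This is not a cosmetic slip: with your coefficients the $|S|=1$ stratum contributes $\mp\, 2^{-(d-1)}\sum_j a_j^{-1}|\Omega_j|t^{d-1}$, twice the correct value, and this is precisely the second-order term the theorem is about. The extra factor $\tfrac12$ you then insert (``$(-1)\cdot 2^{-(d-1)}\cdot\tfrac12$'') is exactly the missing correction, but it is asserted rather than derived, so the crux of the computation is left unproved.

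Once the coefficients are fixed the argument closes: $M_\emptyset$ is handled by Proposition \ref{LPP}, each $M_{\{j\}}=\#(\mathbb Z^{d-1}\cap tA_j\Omega_j)$ equals $a_j^{-1}|\Omega_j|t^{d-1}$ up to the admissible error by \eqref{dimd1} (so you do not actually need the union statement of Proposition \ref{Corunionforhyper}, only its main ingredient), and the $|S|\ge 2$ terms are absorbed via \eqref{dimd2}. For comparison, the paper sidesteps the M\"obius inversion entirely: for \eqref{numpositive} it uses the exact identity $\#(\mathbb N^d\cap tA\Omega)=2^{-d}\bigl(\#(\mathbb Z^d\cap tA\Omega)-\#(\mathbb Z^d\cap tA\bigcup_j\Omega_j)\bigr)$ together with Proposition \ref{Corunionforhyper}, in which the overlaps of the hyperplane sections are already controlled; for \eqref{numnonnega} it adds back the boundary strata one codimension at a time, using the symmetry of each $\Omega_j$ for the $|S|=1$ terms. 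Either route works; yours needs the bookkeeping done correctly rather than patched.
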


\begin{proof}
Since the domain $\Omega$ is symmetric we have
\begin{equation*}
\#\left(\mathbb{N}^d\cap tA\Omega\right)=2^{-d}\left(\#\left(\mathbb{Z}^d\cap tA\Omega\right)-\#\left(\mathbb{Z}^d\cap tA\bigcup_{j=1}^d\Omega_j\right)\right).
\end{equation*}
Then \eqref{numpositive} can be obtained from \eqref{eLPP} and  \eqref{unionforhyper}.

It remains to prove \eqref{numnonnega}. For $1\leq j\leq d$ let $P_{j}(1,2,\ldots,d)$ be the collection of all subsets of $\{1,2,\ldots,d\}$ having exactly $j$ elements. For any $S\in P_j(1,2,\ldots,d)$ let
\begin{equation*}
k(S)=\left\{(k_1,\ldots,k_d)\in\mathbb{Z}^d_+: k_i=0\ \textrm{if}\ i\in S; k_i\in \mathbb{N}\textrm{ otherwise}\right\}.
\end{equation*}
Then
\begin{equation*}
\#\left(\mathbb{Z}_+^d\cap tA\Omega\right)=\#\left(\mathbb{N}^d\cap tA\Omega\right)+\sum_{j=1}^{d}\sum_{S\in P_j(1,\ldots,d)}\#\left(k(S)\cap tA\Omega\right).
\end{equation*}
 Notice that
 \begin{equation*}
\sum_{S\in P_1(1,\ldots,d)}\#\left(k(S)\cap tA\Omega\right)=\sum_{j=1}^{d}\#\left({\mathbb{N}^{d-1}\cap tA_{j}\Omega_{j}}\right).
\end{equation*}
By the symmetry of $\Omega_j$, \eqref{dimd1} and \eqref{dimd2}, we have
\begin{align*}
&\quad \sum_{j=1}^{d}\#\left({\mathbb{N}^{d-1}\cap tA_{j}\Omega_{j}}\right)\\
&=\sum_{j=1}^{d}2^{-(d-1)}a_j^{-1}|\Omega_j|t^{d-1}+O\left( a_*^{d^2-d+1} \left(t^{d-1-\nu_{\Omega}}+t^{d-1-\frac{\mu_{\Omega}}{d-\mu_{\Omega}}}\right)\right).
\end{align*}
By \eqref{dimd2} we also have
\begin{align*}
\sum_{j=2}^d\sum_{S\in P_j(1,\ldots,d)}\#\left(k(S)\cap tA\Omega\right)&\lesssim\sum_{1\le l<m\le d}\#\left({\mathbb{Z}^{d-2}\cap tA_{l,m} \Omega_{l,m}}\right)\\
&\lesssim a_*^{d^2-d+1} \left(t^{d-1-\nu_{\Omega}}+t^{d-1-\frac{\mu_{\Omega}}{d-\mu_{\Omega}}}\right).
\end{align*}
Then \eqref{numnonnega} follows from the above four equalities and \eqref{numpositive}.
\end{proof}

%%%%%%%%%%%%%%%%%%%%%%%%%%%%%%%%%%%%%%%%%%%%%%%%%%%%%%%%%%%%%%%%%%%%%%%%%%%%%%%%
%%%%%%%%%%%%%%%%%%%%%%%%%%%%%%%%%%%%%%%%%%%%%%%%%%%%%%%%%%%%%%%%%%%%%%%%%%%%%%%

\section{Proof of Theorem \ref{main1}} \label{sec3}

With results of lattice point counting established, we follow a standard procedure to prove Theorem \ref{main1}. We refer readers to \cite{LL16} and also \cite[Section 4]{GW18} for this procedure.

We first consider the case $A(t)\in \mathfrak A_{\Omega}(t)$. We set a diagonal matrix
\begin{equation}  \label{def-B}
B=\diag\left(\frac{|\Omega_1|}{\sqrt[d]{|\Omega_1|\cdots|\Omega_d|}},
\ldots,\frac{|\Omega_d|}{\sqrt[d]{|\Omega_1|\cdots|\Omega_d|}}\right).
\end{equation}
Applying \eqref{numpositive} with this stretching factor $B$ yields
\begin{equation}\label{tBD}
\begin{split}
\#\left(\mathbb{N}^d\cap tB\Omega\right)&=2^{-d}|\Omega|t^d-2^{-d}d\sqrt[d]{|\Omega_1|\cdots|\Omega_d|}t^{d-1}\\
              &\quad +O\left(t^{d-1-\nu_{\Omega}}+t^{d-1-\frac{\mu_{\Omega}}{d-\mu_{\Omega}}}\right),
\end{split}
\end{equation}
which leads to
\begin{equation} \label{idlowwer}
       \#\left(\mathbb{N}^d\cap tB\Omega\right)\geq 2^{-d}|\Omega|t^d-2^{1-d}d\sqrt[d]{|\Omega_1|\cdots|\Omega_d|}t^{d-1}
\end{equation}
for sufficiently large $t$.

Since $A(t)\in \mathfrak A_{\Omega}(t)$, we have
\begin{equation*}
t/a_*(t)\ge 1/C,
\end{equation*}
where $a_*(t)=\|A(t)^{-1}\|_{\infty}$ and $C>0$ is a constant satisfying $\Omega\subset[-C,C]^d$, otherwise $tA(t)\Omega$ does not contain any positive lattice point. Then Lemma \ref{general} gives
\begin{equation}\label{optimalupper}
\#\left(\mathbb{N}^d\cap tA(t)\Omega\right)\leq 2^{-d}|\Omega|t^d-ca_*(t)t^{d-1},
\end{equation}
where $c$ is a positive constant depending only on the domain $\Omega$.

Combining  \eqref{idlowwer}, \eqref{optimalupper} and
\begin{equation}\label{lll}
\#\left(\mathbb{N}^d\cap tB\Omega\right)\leq\#\left(\mathbb{N}^d\cap tA(t)\Omega\right)
\end{equation}
yields that
\begin{equation*}
a_*(t)\leq 2^{1-d}d\sqrt[d]{|\Omega_1|\cdots|\Omega_d|}/c,
\end{equation*}
namely, $a_*(t)$ is uniformly bounded from above for sufficiently large $t$.

Applying \eqref{numpositive} with the stretching factor $A(t)$ gives
\begin{equation}\label{tA(t)D}
\begin{split}
\#\left(\mathbb{N}^d\cap tA(t)\Omega\right)&=2^{-d}|\Omega|t^d-2^{-d}\sum_{j=1}^d a_j(t)^{-1}|\Omega_j|t^{d-1}\\
&\quad +O\left(t^{d-1-\nu_{\Omega}}+t^{d-1-\frac{\mu_{\Omega}}{d-\mu_{\Omega}}}\right).
\end{split}
\end{equation}
Combining \eqref{tBD}, \eqref{tA(t)D} and \eqref{lll} yields that
\begin{equation*}
\sum_{j=1}^d a_j(t)^{-1}\frac{|\Omega_j|}{\sqrt[d]{|\Omega_1|\cdots|\Omega_d|}}\leq d+O\left(t^{-\nu_{\Omega}}+t^{-\frac{\mu_{\Omega}}{d-\mu_{\Omega}}}\right).
\end{equation*}
Then the desired convergence \eqref{optpositive} follows easily from an elementary result in \cite[Lemma B.1]{GW18}. This completes the proof of the first case.

%%%%%%%%%%%%%%%%%%%%%%%%%%%%%%%%%%%%%%%%%%%%%%%%%%%%%%%%%%%%%%%%%%%%%%%%%%%%%%%%%%%%%%%%%%%%%%%%

The second case $\widetilde{A}(t)\in \widetilde{\mathfrak A}_{\Omega}(t)$ can be proved similarly. We sketch its proof. Applying \eqref{numnonnega} with the matrix $B$ (defined by \eqref{def-B}) yields
\begin{align}\label{nonnegative-idlowwer}
 \#\left(\mathbb{Z}_+^d\cap tB\Omega\right)\leq 2^{-d}|\Omega|t^d+2^{1-d}d\sqrt[d]{|\Omega_1|\cdots|\Omega_d|}t^{d-1}
\end{align}
for sufficiently large $t$. We also have
\begin{equation}\label{lllnonnegat}
\#\left(\mathbb{Z}^d_+\cap tB\Omega\right)\geq\#\left(\mathbb{Z}^d_+\cap t\widetilde{A}(t)\Omega\right).
\end{equation}

Let $\tilde{a}_*(t)=\|\widetilde{A}(t)^{-1}\|_\infty$. We claim that if $t$ is sufficiently large, then
\begin{equation*}
t/\tilde{a}_*(t)\geq 1/C
\end{equation*}
with the same constant $C$ aforementioned. Indeed, if $t/\tilde{a}_*(t)<1/C$ then $\mathbb{Z}^d_+\cap t\widetilde{A}(t)\Omega$ is contained in $t\widetilde{A}_j(t)\Omega_j$ for some $1\leq j\leq d$, where $\widetilde{A}_j(t)$ is the $(d-1)\times (d-1)$ matrix obtained from $\tilde A(t)$ by removing its $j$-th row and column. Hence
\begin{align*}
\#\left(\mathbb{Z}^d_+\cap t\widetilde{A}(t)\Omega\right)&\geq 2^{-(d-1)}|t\widetilde{A}_j(t)\Omega_j|\\
&>2^{1-d}C|\Omega_j| t^d,
\end{align*}
where in the second inequality we have used $t/\tilde{a}_*(t)<1/C$.
Since $\Omega \subsetneq \Omega_j\times [-C,C]$, we have $2C|\Omega_j|>|\Omega|$. If $t$ is sufficiently large we then have
\begin{equation*}
\#\left(\mathbb{Z}^d_+\cap t\widetilde{A}(t)\Omega\right)\!\!>\!\!2^{-d}|\Omega| t^d+2^{1-d}d\sqrt[d]{|\Omega_1|\cdots|\Omega_d|}t^{d-1}\!\!\geq\!\! \#\left(\mathbb{Z}^d_+\cap tB\Omega\right)
\end{equation*}
by \eqref{nonnegative-idlowwer}. This contradicts with \eqref{lllnonnegat}.

It is then easy to show that $\tilde{a}_*(t)$ is uniformly bounded, as a consequence of \eqref{nonnegative-idlowwer}, \eqref{lllnonnegat} and Lemma \ref{general}. We next apply \eqref{numnonnega} to $\#(\mathbb{Z}_+^d\cap t\tilde A(t)\Omega)$ and $\#(\mathbb{Z}_+^d\cap tB\Omega)$ and use \eqref{lllnonnegat} and \cite[Lemma B.1]{GW18} to finish the proof.  \qed

\appendix

\section{Multitype}\label{appmultitype}

In the appendix we compare the multitypes of $\partial\Omega_j$  and $\partial \Omega$ at a common point $P\in \partial\Omega_j\subset\partial \Omega$. The result is a direct consequence of the definition of multitype (see for example  \cite[P. 155--156]{ISS02}), which  says that the $i$-th component of the multitype of $\partial\Omega_j$ at $P$ is not greater than the $(i+1)$-th component of the multitype of $\partial \Omega$ at $P$.

\begin{lemma}\label{factmultitype}
Let $\Omega\subset \mathbb R^{d}$ $(d\geq 3)$ be a convex compact domain with smooth boundary of finite type and $\Omega_j\subset \mathbb{R}^{d-1}$, $1 \le j \le d$, the intersection of $\Omega$ with the coordinate hyperplane  $x_j=0$. If $(\tilde{\mathfrak a}_1,\ldots,\tilde{\mathfrak a}_{d-2})$ and  $(\mathfrak a_1,\ldots,\mathfrak a_{d-1})$ are multitypes of $\partial\Omega_j$ and $\partial\Omega$ at $P\in \partial\Omega_j $ respectively, then for any $1\le i\le d-2$ we have
\begin{equation*}
\tilde{\mathfrak a}_i\le \mathfrak a_{i+1}.
\end{equation*}
\end{lemma}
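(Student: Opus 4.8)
The plan is to work directly from the definition of multitype as recalled in \cite[P.~155--156]{ISS02}: at a boundary point $P\in\partial\Omega$, after translating $P$ to the origin and rotating so that $T_P(\partial\Omega)$ becomes $\{x_d=0\}$ with exterior normal $-e_d$, one writes $\partial\Omega$ locally as a graph $x_d=\varphi(x_1,\dots,x_{d-1})$ with $\varphi(0)=0$, $\nabla\varphi(0)=0$, $\varphi\ge 0$ by convexity. The multitype $(\mathfrak a_1,\dots,\mathfrak a_{d-1})$ is then extracted by a greedy/minimax procedure: $\mathfrak a_1$ is the order of the ``flattest'' tangent direction, and having chosen the first $j-1$ subspaces $S_P^{m_1}\supset\cdots\supset S_P^{m_{j-1}}$ carrying the flattest behaviour, $\mathfrak a_j$ is the minimal order of contact of $\varphi$ restricted to the remaining directions, optimized over the choice of complementary subspace. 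The key structural fact I will use is that this construction is \emph{monotone under restriction to a coordinate subspace of the tangent space}.

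First I would set up the geometry: since $P\in\partial\Omega_j$ and $\Omega_j=\Omega\cap\{x_j=0\}$, the tangent space $T_P(\partial\Omega_j)$ is exactly $T_P(\partial\Omega)\cap\{x_j=0\}$, a hyperplane inside $T_P(\partial\Omega)$, and $\partial\Omega_j$ is locally the graph of $\tilde\varphi:=\varphi|_{\{x_j=0\}}$ over that hyperplane (here I may rotate within $T_P(\partial\Omega)$ so that $\{x_j=0\}\cap T_P(\partial\Omega)$ is a coordinate hyperplane, which does not affect multitype). Then I would run the defining minimax for $\partial\Omega_j$ at $P$: the flag of subspaces and the orders of contact for $\tilde\varphi$ are computed over subspaces of $T_P(\partial\Omega_j)$, which are in particular subspaces of $T_P(\partial\Omega)$. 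The crucial observation is that the order of contact of $\tilde\varphi$ along any line $\ell\subset T_P(\partial\Omega_j)$ equals the order of contact of $\varphi$ along the same line $\ell$, since $\tilde\varphi$ is just $\varphi$ with some variables frozen at $0$.

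Next I would argue the inequality $\tilde{\mathfrak a}_i\le\mathfrak a_{i+1}$ by comparing the two minimax problems step by step. For $\partial\Omega$, after the first $i$ stages we have a subspace $S_P^{m_i}\subset T_P(\partial\Omega)$ of dimension $d-1-i$ on which the minimal order of contact of $\varphi$ is $\mathfrak a_{i+1}$. I would intersect $S_P^{m_i}$ with $T_P(\partial\Omega_j)$: this is a subspace of $T_P(\partial\Omega_j)$ of dimension at least $d-2-i$, and on it the minimal order of contact of $\tilde\varphi$ (equivalently of $\varphi$) is still $\le\mathfrak a_{i+1}$, because we have only shrunk the ambient space. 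Since the $i$-th component $\tilde{\mathfrak a}_i$ of the multitype of $\partial\Omega_j$ is, by its minimax characterization, the minimum over all $(d-2-i)$-dimensional (or appropriately codimensioned) admissible subspaces of $T_P(\partial\Omega_j)$ of the maximal-over-lines order of contact — and the subspace $S_P^{m_i}\cap T_P(\partial\Omega_j)$ is a legitimate competitor at that stage — one gets $\tilde{\mathfrak a}_i\le\mathfrak a_{i+1}$. I would spell out that the flag built for $\partial\Omega$ restricts to an admissible (though not necessarily optimal) flag for $\partial\Omega_j$, shifted by one index, which is exactly the content of the displayed inequality.

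The main obstacle I anticipate is bookkeeping with the precise minimax definition of multitype from \cite{ISS02}: one must be careful that the ``admissible'' subspaces in the definition (the $W_j$'s and the flag $S_P^{m_j}$) are characterized purely by dimension and the vanishing-order conditions, so that restricting the flag of $\partial\Omega$ to the coordinate hyperplane $\{x_j=0\}\cap T_P(\partial\Omega)$ indeed yields a flag satisfying the defining conditions for $\partial\Omega_j$ at $P$ — in particular that no degeneracy or dimension drop forces a strict inequality in the wrong direction. Once the definitions are laid out carefully, however, the argument is a direct unwinding: restricting the Taylor expansion of $\varphi$ to a coordinate subspace can only lose directions, hence can only decrease (or keep equal) the optimal contact orders, and the index shift by one comes precisely from the drop in dimension by one when passing from $T_P(\partial\Omega)$ to $T_P(\partial\Omega_j)$.
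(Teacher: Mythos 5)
Your overall strategy---restrict the graphing function $\Phi$ to the hyperplane $H=T_P(\partial\Omega_j)\subset T_P(\partial\Omega)$, observe that directional vanishing orders along lines of $H$ are unchanged, and convert the drop of the ambient tangent dimension by one into the index shift $i\mapsto i+1$---is exactly the paper's, which also works straight from the definition in \cite{ISS02}. However, the key comparison step as written does not go through. First, the monotonicity in ``on $S_P^{m_i}\cap T_P(\partial\Omega_j)$ the minimal order of contact is still $\le\mathfrak a_{i+1}$, because we have only shrunk the ambient space'' is backwards: shrinking the set over which a minimum is taken can only \emph{increase} the minimum, and that intersection may well have minimal contact order strictly larger than $\mathfrak a_{i+1}$. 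Second, you then feed this into a characterization of $\tilde{\mathfrak a}_i$ as a \emph{min over subspaces of the max over lines}, for which a bound on the \emph{minimal} order on one competitor is not the relevant quantity; moreover $S_P^{m_i}\cap H$ has dimension at least $d-2-i$, which in the max--min formulation is the competitor dimension for $\tilde{\mathfrak a}_{i+1}$, not for $\tilde{\mathfrak a}_i$---pushed through correctly, your competitor yields the (true but different) inequality $\mathfrak a_{i+1}\le\tilde{\mathfrak a}_{i+1}$. Note also that the multitype in \cite{ISS02} is defined through the canonical flag $S_P^m=\{x:\, D_x^l\Phi(0)=0,\ 2\le l\le m\}$, not through a minimax, so whichever minimax reformulation you invoke must be stated with the correct dimensions and justified.

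The clean way to finish---and what the paper does---avoids minimax altogether. Since $D^l_{\tilde x}\tilde\Phi(0)=D^l_{(\tilde x,0)}\Phi(0)$, one has $\tilde S_P^m=S_P^m\cap H$, hence $\dim\tilde S_P^m\le\dim S_P^m$ for every $m$, while the ambient dimension drops from $d-1$ to $d-2$. Counting entries gives $\#\{l:\tilde{\mathfrak a}_l\le m\}=(d-2)-\dim\tilde S_P^m\ge\bigl((d-1)-\dim S_P^m\bigr)-1=\#\{l:\mathfrak a_l\le m\}-1$; taking $m=\mathfrak a_{i+1}$ shows that at least $i$ entries of $\tilde{\mathfrak a}$ are $\le\mathfrak a_{i+1}$, i.e.\ $\tilde{\mathfrak a}_i\le\mathfrak a_{i+1}$. (Equivalently, if you prefer a minimax: both $\mathfrak a_{i+1}$ and $\tilde{\mathfrak a}_i$ are the max over subspaces of the \emph{same} dimension $d-1-i$ of the minimal contact order, the former over subspaces of $T_P(\partial\Omega)$ and the latter over subspaces of $H$, so the inequality is just ``max over a smaller family is smaller''.) Finally, be aware that identifying the graphing function of $\partial\Omega_j$ with $\Phi|_H$ uses that $n(P)$ lies in $\{x_j=0\}$ (guaranteed here by the symmetry of $\Omega$, since reflection in $x_j$ fixes $P$); this deserves a sentence.
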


\begin{proof}
We first briefly recall the definition of multitype. Let
\begin{equation*}
\{u_1,\ldots, u_{d-1}, -n(P)\}
\end{equation*}
be an orthonormal basis with the same orientation as $\{e_1,\ldots,e_d\}$ and $u_1,\ldots,u_{d-2}\in T_P(\partial\Omega_j)$ and $u_{d-1}\in T_P(\partial\Omega)$. Then the boundary $\partial \Omega$  in a small neighborhood of $P$ can be parameterized by
\begin{equation}\label{localparametrize1}
 \Gamma(V)=P+V-\Phi(V) n(P),
\end{equation}
where $V=\sum_{i=1}^{d-1}x_iu_i\in T_P(\partial\Omega)$ and
\begin{equation*}
\Phi(V)=\Phi(x_1,\ldots,x_{d-1}).
\end{equation*}
It is obviously that $\Phi(0)=\nabla\Phi(0)=0$. For any $m\ge 2$, define
\begin{equation*}
S_P^m=\left\{x\in\mathbb R^{d-1}:\sum_{j=2}^{m}\left|D_x^j\Phi(0)\right|=0\right\},
\end{equation*}
where
\begin{equation*}
D_x^j\Phi(0)=\left.\left(\frac{\partial}{\partial t}\right)^j\Phi(tx)\right|_{t=0}
\end{equation*}
is the $j$-th derivative of $\Phi$ at the origin in the direction $V=\sum_{i=1}^{d-1}x_iu_i$. Then there are at most $d-1$ even numbers
\begin{equation*}
2\le m_1<m_2<\cdots<m_k,\quad 1\le k\le d-1
\end{equation*}
such that the sequence
\begin{equation}\label{spacesequence}
\{0\}=S_P^{m_k}\subsetneq\cdots\subsetneq S_P^{m_1}\subsetneq S_P^{m_0}=\mathbb R^{d-1}
\end{equation}
is maximal in the sense that $S_P^n=S_P^{m_{j-1}}$ if $m_{j-1}\le n<m_j$ (see \cite[P. 1270]{schulz}). Here  $m_0=m_1-1$.
 For $1\le j\le k$ we define
\begin{equation*}
\mathfrak a_i=m_j\quad\text{if}\quad d-1-\dim S_P^{m_{j-1}}<i\le d-1-\dim S_P^{m_j}.
\end{equation*}
Then $\mathfrak a=(\mathfrak a_1,\ldots,\mathfrak a_{d-1})$ is the multitype of $\partial \Omega$ at $P$.
Notice that the multitype $\mathfrak a$ is independent of the choice of the orthonormal basis $\{u_1,\ldots,u_{d-1}\}$. Furthermore, the convexity of $\Phi$ makes $S_P^{m_j}$'s the linear subspaces of $\mathbb R^{d-1}$.  Let $W_j$ be the orthogonal complement of $S_P^{m_j}$ in $S_P^{m_{j-1}}$. Then
\begin{equation}\label{directsum1}
S_P^{m_0}=S_P^{m_j}\oplus W_j\oplus W_{j-1}\oplus\cdots\oplus W_1.
\end{equation}
We observe that the dimension of $W_j$ is the number of $m_j$ appearing in the multitype $\mathfrak a$.

Correspondingly, by our choice of $\{u_1,\ldots,u_{d-2}\}$, the boundary $\partial\Omega_j$ in a small neighborhood of $P$ can be parameterized by
\begin{equation*}\label{localparametrize2}
\Gamma(V)=P+V-\tilde\Phi(V)n(P),
\end{equation*}
where  $V=\sum_{i=1}^{d-2}x_iu_i\in T_P(\partial\Omega_j)$ and
\begin{equation*}
\tilde\Phi(V)=\tilde\Phi(x_1,\ldots,x_{d-2})=\Phi(x_1,\ldots,x_{d-2},0).
\end{equation*}
Notice that for any $\tilde x=(x_1, \ldots, x_{d-2})\in \mathbb R^{d-2}$ and $j\ge 2$,
\begin{align*}
D_{\tilde x}^j\tilde\Phi(0)
&=\left.\left(\frac{\partial}{\partial t}\right)^j\Phi(tx_1,\ldots,tx_{d-2},0)\right|_{t=0}
=D_{x}^j\Phi(0),
\end{align*}
where $x=(x_1, \ldots, x_{d-2},0)$, namely  the $j$-th derivative of $\tilde \Phi$ at the origin in the direction $V=\sum_{i=1}^{d-2}x_iu_i$ equals  the $j$-th derivative of $\Phi$ at the origin in the direction $V=\sum_{i=1}^{d-2}x_iu_i+0u_{d-1}$.
Hence  all $\tilde{\mathfrak a}_j$'s are chosen from $\{m_1,\ldots,m_k\}$ by the maximization of the space sequence \eqref{spacesequence} and the definition of multitype. For every $m_j$, let
\begin{equation*}
\tilde S_P^{m_j}=\left\{\tilde x\in \mathbb R^{d-2}:\sum_{j=2}^{m_j}|D_{\tilde x}^j\tilde\Phi(0)|=0\right\}
\end{equation*}
and $\widetilde W_j$  be the orthogonal complement of $\tilde S_P^{ m_j}$ in $\tilde S_P^{ m_{j-1}}$.
Then we have
\begin{equation}\label{directsum2}
\tilde S_P^{m_0}=\tilde S_P^{m_j}\oplus \widetilde W_j\oplus \widetilde W_{j-1}\oplus\cdots\oplus \widetilde W_1.
\end{equation}

 Notice that
\begin{equation}\label{multitypefacts1}
\dim S_P^{ m_0}=d-1,\quad \dim \tilde S_P^{m_0}=d-2
\end{equation}
and for any $1\le j\le k$,
\begin{equation}\label{multitypefacts2}
\dim S_P^{m_j}\ge \dim \tilde S_P^{ m_j}.
\end{equation}
 Then  combining  \eqref{directsum1}--\eqref{multitypefacts2}  yields that for any $1\le j\le k$,
\begin{equation*}
\dim(W_1\oplus\cdots\oplus W_j)-1\le \dim(\widetilde W_1\oplus\cdots\oplus\widetilde W_j),
\end{equation*}
namely   the number of $m_1$ appearing in $\tilde {\mathfrak a}$  is no less than the number of $m_1$ appearing in $\mathfrak a$ minus $1$, and  the same is true for the number of $m_1,m_2$ and more generally for the number of $m_1,m_2,\ldots,m_j$ with $1\le j\le k$.  Hence we obtain the desired result. Notice that in some special cases we may have $\dim \widetilde W_j=0$. Then $m_j$ will not appear in $\tilde{\mathfrak a}$. But this does not affect our conclusion.
\end{proof}

%
%\subsection*{Acknowledgements}

%

%%%%%%%%%%%%%%%%%%%%%%%%%%%%%%%%%%%%%%%%%%%%%%%%%%%%%%%%%%%%%%%%%%%%%%%%%%%%%%%%%%%%%%%%%%
%%%%%%%%%%%%%%%%%%%%%%%%%%%%%%%%%%%%%%%%%%%%%%%%%%%%%%%%%%%%%%%%%%%%%%%%%%%%%%%%%%%%%%%%%%%%%%%%%

\end{document}